\theoremstyle{remark}\newtheorem{remark}{Remark}[section]
\theoremstyle{plain}
	\newtheorem{theorem}[remark]{Theorem}
	\newtheorem{lemma}[remark]{Lemma}
\theoremstyle{definition}
\newcommand{\abs}[1]{\left\vert#1\right\vert}
\newcommand{\G}{\Gamma}
\newcommand{\cE}{\mathcal{E}}
\newcommand{\cF}{\mathcal{F}}
\newcommand{\cU}{\mathcal{U}}
\newcommand{\cJ}{\mathcal{J}}
\newcommand{\cI}{\mathcal{I}}
\newcommand{\cM}{\mathcal{M}}
\newcommand{\cS}{\mathcal{S}}
\newcommand{\cW}{\mathcal{W}}
\newcommand{\cV}{\mathcal{V}}
\newcommand{\cX}{\mathcal{X}}
\newcommand{\dual}[2]{\langle#1,\,#2\rangle}
\newcommand{\normBL}[1]{\left\Vert#1\right\Vert^\ast_{BL}}
\newcommand{\R}{{\mathbb R}}
\newcommand{\N}{{\mathbb N}}
\newcommand{\out}{\textrm{Out}}
\newcommand{\inc}{\textrm{Inc}}
\numberwithin{equation}{section}
\begin{document}

\title[Traffic optimization on networks]{A measure theoretic  approach to traffic flow optimization on networks}
\date{\today}

\author{Simone Cacace}
\address{Dipartimento  di Matematica e Fisica, Universit{\`a} degli Studi Roma Tre, L.go S.Leonardo Murialdo 1, 00146   Roma, Italy}
\email{\tt e-mail: cacace@mat.uniroma3.it}

\author{Fabio Camilli}
\address{Dipartimento di Scienze di Base e Applicate per l'Ingegneria, ``Sapienza'' Universit{\`a}  di Roma, Via Scarpa 16, 00161 Roma, Italy}
\email{camilli@sbai.uniroma1.it}

\author{Raul De Maio}
\address{Dipartimento di Scienze di Base e Applicate per l'Ingegneria, ``Sapienza'' Universit{\`a}  di Roma, Via Scarpa 16, 00161 Roma, Italy}
\email{raul.demaio@sbai.uniroma1.it}

\author{Andrea Tosin}
\address{Dipartimento di Scienze Matematiche  ``G. L. Lagrange'', Politecnico di Torino, Corso Duca degli Abruzzi 24, 10129 Torino, Italy}
\email{andrea.tosin@polito.it}

\subjclass[2010]{49J20, 35B37, 35R02, 49M07, 65M99}
\keywords{Network, transport equation, measure-valued solutions, transmission conditions, optimization}
\begin{abstract}
We consider  a class of optimal control problems for  measure-valued nonlinear transport equations describing traffic flow problems on networks.
The objective is to minimise/maximise macroscopic quantities, such as traffic volume or average speed, controlling few agents,
for example  smart traffic lights and automated cars. The  measure theoretic approach allows to study in a same setting local and nonlocal    drivers
interactions and to consider the control variables as additional measures interacting with the drivers distribution.
We also propose a gradient descent adjoint-based optimization method, obtained by deriving first-order optimality conditions for the control problem,
and we provide some numerical experiments in the case of smart traffic lights for a 2-1 junction.
\end{abstract}

\maketitle
\section{Introduction}
During the last years, the study   of vehicular and pedestrian traffic flow problems has become a very active   area and an opportunity of
information exchange between mathematical investigation  and applied research.
From a mathematical point of view, these phenomena have been largely studied due to their high complexity and the literature offers a broad variety of models devoted to their description in a wide range of scenarios, see \cite{Bellomo,Bressan,Garavello} for reviews.
On the other side, from an engineering point of view, it is important   to model, simulate, predict, control and optimize vehicular and pedestrian traffic in our society. These issues become more and more central with the fast technological progress and it is of particular interest to understand how  the latest technologies, such as   smart traffic lights, self-driving cars or big data, can be used to improve the quality of movement for drivers or pedestrians on road networks  and urban roads, see \cite{Cascetta,Work}.\par
In this paper we propose a model to simulate and optimize    traffic flow on networks based on the theory of measure-valued transport equations.
In this approach, the population is represented by a probability distribution which evolves according to a velocity field depending on the position of the other individuals. In this way   short and long range interaction mechanisms are readily taken into account into the dynamics of the problem.
Moreover the measure approach  easily catches the multi-scale nature of vehicular traffic, composed both by a continuous distribution of indistinguishable cars and  by some special individuals such as automated cars and traffic lights. With respect to other models considering transport equations with nonlocal interactions
(see \cite{Ambrosio,Canizo,Colombo}), the peculiarity of our  model is to be defined on a network, posing additional difficulties for  the interpretation  in a measure-theoretic sense of the transition conditions at the vertices. Existence, uniqueness and continuous dependence results for the corresponding  measure-valued transport equation were provided in \cite{CDTloc, CDTnonloc}.\par
In \cite{Bon_Butt, cavagnari1, cavagnari2}, the authors consider     optimal control problems for measure transport equations in the Euclidean space.
Relying on a similar approach, we consider a model where, besides the driver distributions,   the velocity field depends   also     on a external distribution which interacts with the original population in order to optimize, for example, traffic volume or   average speed    on the road network. As in \cite{Albi,Work}, our aim is to show that a small number of external agents can improve the global behavior of the   population and, indeed, the typical examples of control variables we consider are  smart traffic lights and automated cars. Since the external distribution  is described by a measure evolving  according to   an appropriate dynamics,  other control variables, such as    information about the behavior of the traffic  on the global network, can be considered.\par
The paper is organized as follows.  In Section \ref{S1} we  introduce  the control problem from a theoretical point of view: network structure, transport equation and  cost functional. Section \ref{S2} is  devoted to two examples of control problem: traffic lights and self-driving cars as controls for vehicular traffic. Section \ref{S3} focuses on numerical analysis for these problems: description and properties of the chosen scheme and numerical tests on some case studies. In the Appendix we report the proofs of some theoretical results contained in the previous sections.
\section{Problem Formulation and theoretical setting}\label{S1}
In this section we describe the main components of the traffic flow model, i.e. the structural components (roadway and priority
rules at the junctions), the dynamics of drivers motion (velocity, interaction  with other drivers, influence of the  structural components)
and   the  control   problem which has to be  solved in order optimize the traffic flow on the network.
\subsection{Structural components}
Traffic routes are mathematically described by a network $\G = (\cV, \cE)$ where $\cE = \{e_{1}, e_{2}, \hdots, e_{|\cE|}\}$ is the set of arcs/roads while the crossroads are represented by the set of the vertexes $\cV = \{V_1, \hdots, V_{|\cV|}\}$. The network is   oriented  and  we write  $e_k\to e_j$ and, respectively, $x\to y$ for $x,y\in\G$   to mean    that $e_k$ comes before $e_j$ and, respectively, $x$    before $y$ in the orientation of the network.
We assume that $\G$ is endowed with the minimum path distance $d_{\G}$ and each arc $e_j \in \cE$ is parametrised by a continuous bijective map $\pi_k: [0,L_k] \to e_k,$   $L_k \in (0, +\infty]$, which complies with the orientation of $\G$, i.e. if $V_i,V_j \in \cV$ are the vertexes of the arc    $e_k$ oriented from $V_i$ to $V_j$, then  $\pi_k(0) = V_i$ and $\pi_k(L_k)=V_j$ \\
For every $V_i \in \cV$, we denote by $\inc (V_i)$ the set of arcs in $\cE$ whose end  point is $V_i$ and by $\out(V_i)$ the set of arcs in $\cE$ whose starting point is $V_i$.  Then, we  divide the set of the vertexes respectively in the sets of sources, sinks and junctions
\begin{align*}
\begin{array}{c}
\cS = \{V_i \in \cV |\, \text{Inc}(V_i) = \emptyset\}, \\
 \cW = \{V_i \in \cV | \,\out(V_i) =\emptyset \}, \\
 \cJ = \{V_i \in \cV |\, \text{Out}(V_i)\not=\emptyset,\, \text{Inc}(V_i)\not=\emptyset \}.
\end{array}
\end{align*}
Since the velocity term   depends  on the distribution of the cars  on all the network, in order to simplify
the notations  we prefer to consider a network without sinks, i.e. the set $ \cW$ is empty and the terminal arcs always  have infinite length.
We also denote by $L_0$ the minimal length of the edges in $\cE$, i.e.
\begin{equation}\label{min_length}
L_0=\min_{e_j\in \cE} \ell(e_j).
\end{equation}

A convenient framework to study transport problems     is given by the measure theoretic one, since  it allows to consider in a same setting
macroscopic quantities such as  a continuous distribution of drivers and microscopic ones such as traffic lights and other elements of the model.
  We set $\G_T=\G\times [0,T]$ and we consider the metric space $(\G_T,d)$ where
$d((x,t),(y,s))=d_\G(x,t)+|t-s|$. For a function $\phi:\G_T\to\R$ we
define the norm
$$\|\phi\|_{BL} = \|\phi\|_{\infty} +\sup_{\substack{x,\,y\in\G_T \\ x\ne y}} \frac{|\phi(x) - \phi(y)|}{d(x,y)} ,$$
and we consider  the Banach space $BL(\G_T)$   of bounded and Lipschitz continuous functions equipped with norm  $\|\cdot\|_{BL}$.
Denoted by  $\cM(\G_T)$   the space of finite measure on $\G_T$, we define   a dual  norm on this space  by
\[
\|\mu\|_{BL}^* = \sup_{\substack{\phi \in BL(\G_T) \\ \|\phi\|_{BL}\leq 1}}|\dual{\mu}{\varphi}|.
\]
Similar notations and definitions are employed for the Banach space $\cM(\G)$ and $\cM([0,T])$.
In the following we will always consider measures in  $\cM^+(\G_T)$, the cone of positive measures  in $\cM(\G_T)$.
By the Disintegration Theorem,  we consider measures $\mu \in  \cM^+(\G_T)$ which can be  decomposed as
$$\mu(dxdt) = \mu_t(x)dt,$$
where $\mu_t \in \cM^+(\G)$ represents the distribution   at time $t \in [0,T]$. We remark that throughout the paper
we only consider measures without Cantorian part, since this kind of measure does not have any significant
interpretation for flow  traffic problems.
To model the   behavior  of   drivers at   junctions  we  assign a \emph{distribution matrix}
$P(t) = (p_{kj}(t))_{k,j = 1}^{|\cE|}$, for $t \in [0,T]$, satisfying the following properties
\begin{equation}\label{distr_matrix}
\begin{array}{c}
p_{kj} \in BV([0,T]),\quad p_{kj}(t) \in [0,1],\\[4pt]
\sum_{j=1}^{|\cE|}p_{kj}(t) = 1,\quad \forall t\in [0,T],\,\forall k=1,\dots,|\cE|, \\[4pt]
p_{kj}(t)=0\quad \text{if either $e_k\cap e_j=\emptyset$ or $e_j\to e_k$.}
\end{array}
\end{equation}
Here $p_{kj}(t)$ represents the fraction of drivers which at time $t$ flows from an arc $e_k$ to an arc $e_j$. Hence, for every arc $e_k$, we have a discrete probability distribution $P_k(t) = \{p_{kj}(t)\}_j$ which describes the behaviour of drivers at the junction at time $t$. This quantity is defined on the basis
of the   knowledge of the statistical behavior of the traffic at a given day time (see Gentile's work \cite{g1, g2}). The assumptions in \eqref{distr_matrix} implies the mass cannot concentrate at the vertexes and therefore the total mass is conserved at the internal junctions. Since we   consider  measures $\mu \in \cM^+(\G_T)$ without Cantorian part, we assume that $p_{kj} \in BV([0,T])$ so that for a measure $\mu\in\cM^+(\G_T)$ the product $p_{kj}\cdot \mu$ still has no Cantorian part.\par
\subsection{Driver motion}
We now describe the nonlinear  transport system which models the evolution of the
traffic  on the network. The   components of the system  are the differential equations governing
the evolution of the traffic inside the  arcs and the transition conditions at the vertices regulating the
distribution  of the traffic flow at the junctions. It is important to remark that the velocity term
is nonlocal since drivers usually have a local knowledge of the traffic distribution  in a visual
area in front of them; moreover they may have a global  knowledge
of the traffic distribution on the entire network thanks to appropriate navigation equipments.\par
We prescribe the initial mass distribution over $\Gamma$
$$m_0=\sum_{j\in J}m_0^j\in \cM^+(\G),$$
where $m_0^j$ is restriction of $m_0$ to   $e_j$, and the incoming traffic measure at the source nodes
$$\sigma_0 = \sum_{V_i \in \cS}\sigma^i_0, \quad \sigma^i_0 \in \cM^+([0,T]),$$
where $\sigma_0^i$ is the restriction of $\sigma_0$ to   $V_i$, representing the flow of cars
entering in the road network at the vertex $V_i$.
We consider the following system of measure-valued differential equations on $\G_T$  for the unknown  measure $m=\sum_{j\in J}m^j\in \cM^+(\G_T)$
\begin{equation}\label{system}
\left\{	\begin{array}{lr}
		\partial_{t}m^{j}+\partial_x(v^j[m_t,\mu_t]m^j)=0\quad &x\in e_j,\,t\in (0,\,T],\, j=1,\dots,|\cE|  \\[2mm]
		m_{t=0}^j=m_0^j   & x\in e_j,  j=1,\dots,|\cE| \\[2mm]
		m^{j}_{V_i=\pi_j(0)}=
			\begin{cases}
				\sum\limits_{e_k\in\inc(V_i)} p_{kj}(t)m_{V_i=\pi_k(1)}^k & \text{if } V_i\in\cI \\[3mm]
			\sigma_0^i & \text{if } V_i\in\cS,
			\end{cases}
 & j=1,\dots,|\cE|.
	\end{array}
\right.
\end{equation}
Observe that,  for each arc $e_j$, if the initial vertex $V_i=\pi_j(0)$ is internal, then  the boundary condition at $V_i$  is given by  a measure representing the mass flowing in $e_j$ from the arcs incident to the vertex according to the distribution  matrix  $P(t)$; if the initial vertex $V_i=\pi_j(0)$ is  incoming traffic vertex, the inflow measure is   the prescribed datum $\sigma_0^i$. The outflow measure, i.e. the part of the mass leaving the arc from the final vertex $V_k=\pi_j(1)$, is not given a priori but depends on the evolution of the measure $m$ inside the arc.\\
The velocity $v=(v^j)_{j=1}^{|\cE|}$ depends on the   solution $m_t $ itself,  as well as on  another distribution  $\mu_t\in \cM^+(\G)$, representing
external forces acting on the drivers such as  traffic lights and autonomous vehicles (more details will be given
in the next section where we   consider specific models). We assume that
\begin{itemize}
    \item[\textbf{(H1)}] $v$ is non-negative and bounded by  $V_{max}>0$;
    \item[\textbf{(H2)}] $v$ is Lipschitz with respect to the state variable, i.e. there exists $L > 0$ such that $\forall  x,y \in e_j$, $m_i,\,\mu_i \in \cM^+(\G)$, for $i = 1,2$
$$|v^j[m_1,\mu_1](x) - v^j[m_2,\mu_2](y)|\leq L(|x - y| + \|m_1 - \mu_1\|_{BL}^* + \|m_2 - \mu_2\|_{BL}^*); $$
\end{itemize}
For the definition of measure-valued  solution to the system \eqref{system}, we refer to \cite{CDTnonloc}.
The next theorem summarize the main results concerning existence, uniqueness and regularity of the measure-valued solution to \eqref{system} in case of a fixed $\mu \in \cM^+(\G)$.
\begin{theorem}\label{existencenet}
There exists a unique  $m\in\cM^+(\G_T)$ which is a measure-valued solution to \eqref{system}.
Moreover,
\begin{itemize}
\item[i)]
    Given   initial data  $m_{0,1}, m_{0,2}\in\cM^+(\Gamma)$    and  boundary data $\sigma_{0,1}, \sigma_{0,2}\in\cM^+( [0,\,T])$  and denoted by $m_1, m_2 \in \cM^+(\G_T)$ the corresponding solutions, there exists a constant $C=C(T)>0$ such that
    \[
    	\|m_{T,2}-m_{T,1}\|^{*}_{BL}\leq C\left(\normBL{m_{0,2}-m_{0,1}}+\normBL{\sigma_{0,2}-\sigma_{0,1}}\right).
    \]
\item[ii)]
    There exists a positive constant $C=C(T)$ such that
    \[
    	\normBL{m_t-m_{t'}}\leq C\abs{t-t'}+\sigma_0((t',\,t])
    \]
    for all $t',\,t\in [0,\,T]$ with $t'<t$.
\end{itemize}
\end{theorem}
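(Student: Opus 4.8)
The plan is to combine a representation formula for the linear transport problem on the network with a Banach fixed-point argument that handles the nonlocal dependence of the velocity on $m$. Since by \textbf{(H1)} the velocity is non-negative, mass always travels in the direction of the orientation of $\G$, and this makes the construction proceed arc-by-arc: on each arc $e_j$ the measure is transported by the characteristic flow generated by the (given, time-dependent) velocity field, the \emph{outflow} at the terminal vertex $\pi_j(1)$ is recorded as a measure on $[0,T]$, and this outflow, weighted by the entries $p_{kj}(t)$ of the distribution matrix, together with the source data $\sigma_0^i$, provides the \emph{inflow} datum at the initial vertex of the outgoing arcs. The constraints on $P$ in \eqref{distr_matrix} guarantee conservation of total mass at internal junctions, so that $\norm{m_t}=\norm{m_0}+\sigma_0([0,t])$ stays bounded on $[0,T]$, and $p_{kj}\in BV([0,T])$ ensures that no Cantorian part is created when the outflow measure is multiplied by $p_{kj}$.

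First I would treat the \emph{linear} problem, i.e.\ \eqref{system} with $v^j[m_t,\mu_t]$ replaced by a prescribed velocity field $w^j(x,t)$ obeying the bounds of \textbf{(H1)}--\textbf{(H2)}. For such a field the flow $\Phi^j_{s,t}\colon e_j\to e_j$ is well defined and Lipschitz, and the solution in the interior of $e_j$ is the push-forward of $m_0^j$ plus the push-forward of the inflow measure injected at $\pi_j(0)$. The estimates one needs are the contractivity properties of push-forward,
\[
\normBL{\Phi_\#\mu-\Phi_\#\nu}\le \mathrm{Lip}(\Phi)\,\normBL{\mu-\nu},\qquad
\normBL{\Phi_\#\mu-\Psi_\#\mu}\le \norm{\mu}\,\norm{\Phi-\Psi}_\infty,
\]
together with linearity and mass-non-increase of the trace (outflow) operator. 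Here \eqref{min_length} plays a role: on a time interval of length $\tau<L_0/V_{max}$ no mass can cross an entire arc, so the outflows of all arcs are explicit functions of the initial data alone and the linear problem decouples into finitely many push-forwards; iterating over $[k\tau,(k+1)\tau]$ one reaches $T$ and obtains existence, uniqueness, and the bound of item i) for the linear problem, with a constant growing at most exponentially in $T$.

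The nonlinear problem is then solved by a fixed point on $C([0,T];(\cM^+(\G),\normBL{\cdot}))$ with a suitably weighted sup-norm in time: the map sending $\tilde m$ to the solution of the linear problem with $w^j=v^j[\tilde m_t,\mu_t]$ is a contraction, because by \textbf{(H2)} one has $\norm{v^j[\tilde m_t,\mu_t]-v^j[\hat m_t,\mu_t]}_\infty\le L\,\normBL{\tilde m_t-\hat m_t}$, and the push-forward estimates above turn this into a contraction once the weight is chosen; uniqueness for \eqref{system} follows from the same estimate. Item i) is then inherited from the linear bound by a Gr\"onwall argument, $C(T)$ absorbing the transport Lipschitz constants and the exponential factor. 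For item ii) I would test $m_t-m_{t'}$ against $\phi\in BL(\G)$ with $\norm{\phi}_{BL}\le1$ and split $m_t$ into the part already present at time $t'$, which equals the push-forward of $m_{t'}$ along the flow from $t'$ to $t$ and hence satisfies $\abs{\dual{m_{t'}}{\phi\circ\Phi_{t',t}-\phi}}\le \norm{m_{t'}}\,V_{max}\,\abs{t-t'}$ since displacements along the flow are bounded by $V_{max}\abs{t-t'}$, and the part injected at the sources during $(t',t]$, whose total mass is exactly $\sigma_0((t',t])$ and which is bounded in the test pairing by $\norm{\phi}_\infty\le1$.

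The main obstacle is the rigorous measure-theoretic meaning of the vertex conditions: $m^k_{V_i=\pi_k(1)}$ and $m^j_{V_i=\pi_j(0)}$ are not point values but \emph{trace fluxes}, and one must show that for a non-negative bounded velocity the transported measure admits a well-defined trace on the terminal vertex as an element of $\cM^+([0,T])$, that this trace operator is continuous in the norms used above, and that multiplying it by $p_{kj}\in BV$ preserves the absence of a Cantorian part, so that the inflow datum of the next arc is again admissible. This is precisely the technical core carried out in \cite{CDTnonloc}, on which the argument above relies.
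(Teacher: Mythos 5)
The paper does not actually prove Theorem \ref{existencenet}: it is stated as a summary of the main results of \cite{CDTloc,CDTnonloc} (``The next theorem summarize the main results concerning existence, uniqueness and regularity\dots''), and no argument for it appears either in the body or in the Appendix, which only proves Lemmas \ref{comp1}, \ref{comp2}, \ref{closed} and \ref{closed_2}. So there is no in-paper proof to compare your attempt against. That said, your strategy --- solve the linear problem arc by arc via push-forward along the characteristic flow, record the outflow trace at each terminal vertex as an element of $\cM^+([0,T])$, feed it through the distribution matrix as the inflow datum of the outgoing arcs, exploit $L_0$ from \eqref{min_length} to decouple the system on time intervals shorter than $L_0/V_{max}$, and then absorb the nonlocal dependence of $v$ on $m$ by a Banach fixed point using \textbf{(H2)} --- is precisely the construction carried out in the cited references, and your identification of the vertex traces (well-posedness, continuity, and stability under multiplication by the $BV$ coefficients $p_{kj}$) as the technical core is accurate.

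One point needs fixing. The fixed-point space $C([0,T];(\cM^+(\G),\normBL{\cdot}))$ is too small: item ii) of the theorem itself shows that $t\mapsto m_t$ has a left jump of size $\sigma_0(\{t\})$ at every atom of the boundary datum, so the solution curve is only right-continuous in general. The contraction must therefore be set up on a larger class (bounded measurable, or c\`adl\`ag, curves of measures with the weighted sup norm), or one must separately assume $\sigma_0$ atomless. With that adjustment the rest of the sketch --- the two push-forward estimates, the Gr\"onwall argument giving the constant $C(T)$ in i), and the splitting of $m_t$ into the transported part of $m_{t'}$ (displaced by at most $V_{max}\abs{t-t'}$) plus the mass $\sigma_0((t',t])$ injected at the sources for ii) --- is sound and consistent with the statement.
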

We will   consider a velocity field  of the form
\begin{equation}\label{velocity}
 v[m,\mu](x) :=\max\{ v_{f}(x) - v_{i}[m](x)-v_{e}[\mu],0\}
\end{equation}
where $v_{f}: \G\to\R^+$ is the desired velocity   representing  the speed of a car over a free road, $v_{i}[m](x)$ is the interaction  term   due to the presence of  other cars  on the roads and   $v_{e}[\mu]$ is the interaction term with an external distribution $\mu$.
Here we describe the  velocities  $v_f$ and $v_i$, while in the next section we will consider velocities $v_{e}[\mu]$ corresponding to
the specific models considered.\\
Concerning the free flow speed $ v_{f}(x)$, which depends only on the state variable $x$, we assume that this
function is  positive, bounded and  Lipschitz continuous  on each arc  $e_j$ of the network $\G$. Hence $\textbf{(H1)-(H2)}$
are easily verified for $v_f$.\\
We  consider  a interaction velocity $v_i$ given by the   functional $$v_i[m](x) := \int_{\G}K(x,y)dm(y).$$
The  interaction kernel $K$ is defined as
\begin{equation}\label{kernel}
  K(x,y) = k(d_\G (x,y))\chi_{\mathcal D(x)}(y)
 \end{equation}
where  $k$ is  a  Lipschitz continuous, non increasing, bounded function  representing the strength of interaction among cars   in dependence on their distance   and $\chi_{\mathcal{D}(x)}$  is the characteristic  function of the set $\mathcal D(x)$ representing the visual field of the driver.
We assume that a driver has  only  the knowledge of the distribution of  cars on the  roads adjacent to the current position and therefore we define  the visual field as
\[\mathcal D(x)= \{y\in\Gamma: \,x\to y,\, d_\G(x,y)\leq R\}\]
with $R<L_0$  and $L_0$  defined in \eqref{min_length}.
Hence it follows that,  given  $x\in e_k$, if  $V_i=\pi_k(L_k)\in\cV$  we have  $\mathcal D(x)\subset e_k\cup( \bigcup_{e_j\in Out(V_i)}e_j)$.
We  prescribe    for any $e_j\in Out(V_i)$   a weight $\alpha_{kj}$  satisfying
\begin{align*}
0\le   \alpha_{kj}\le 1, \qquad \sum_{j=1}^{J}\alpha_{kj}=1,\\
\alpha_{kj}=0\quad \text{if either $e_k\cap e_j=\emptyset$ or   $e_j\to e_k$}.
\end{align*}
where the coefficients  $\alpha_{kj}$  represent the priority of a given route in the choice of the driver depending on the basis of the observed traffic distribution.
In conclusion,  the  interaction velocity at $x\in e_k$ is given
 \begin{equation*}
 v_i[m](x) = \sum_{e_j \in \cE}\alpha_{kj}\int_{\G}k(d_{\G}(x,y))\chi_{\mathcal D(x)\cap (e_k\cup e_j)}(y)dm(y).
 \end{equation*}
Since the function $K$ defined in \eqref{kernel} is nonnegative and  bounded, then
\begin{align*}
  0\le v_i[m](x) \leq C m(\G),&&\forall x \in \G,\\
  |v_i[m_1](x) - v_i[m_2](x)|  \le  C \|m_1 -  m_2\|_{BL}^*,&&\forall x \in \G,\,\forall m_1, m_2 \in \cM^+(\G),
\end{align*}
and therefore $\textbf{(H1)}$ and  $\textbf{(H2)}$ are satisfied. The Lipschitz continuity  with respect to $x$ is   more delicate  and
for its proof we refer to \cite[Sect.5]{CDTnonloc}. A specific example of function $k$ is given by
\[
   k(x,y)=\frac{\mu_2}{(\mu_1+ d_\G(x,y))^\beta}
\]
which  is  inspired by a typical Cucker-Smale nonlocal interaction (see \cite{CuSma}).

\subsection{Mobility optimization}
We introduce a class of optimization problems  on networks involving the distribution $m$, given by  the solution of \eqref{system}, the external distribution
$\mu$      and a   control variable $u$ which has  to be designed in order to minimize/maximize a  given objective functional.  \\
We assume that the set of the admissible controls is given by   a Banach space $(\cU,\|\cdot\|_\cU)$. We also  denote
by $\cM^+_{M}(\G_T)$ the  set of the measures   $\mu\in \cM^+(\G_T)$ such that $\|\mu\|_{BL}^* \le M$. Then the state space of  the control problem
is given by  the   space $(\cX,\|\cdot\|_\cX)$  where
\begin{align*}
\cX&=\cM^+_{M}(\G_T)\times \cM^+_{M}(\G_T) \times\cU,\\
\|\cdot\|_\cX&=\|\cdot\|_{BL}^* +\|\cdot\|_{BL}^*+ \|\cdot\|_{\cU}.
\end{align*}
For a given initial distribution $m_0\in \cM^+(\G)$ and an incoming traffic  distribution  $\sigma_0\in \cM^+([0,T])$, we  consider
the optimization problem
\begin{equation}\label{problem1}
 \left\{
 \begin{array}{l}
  \min\{J(m, \mu, u) : (m,\mu, u) \in \cX\},\\[4pt]
  \text { subject to the state equation \eqref{system}.}
  \end{array}
  \right.
\end{equation}
It is convenient to rewrite  the previous  minimization problem  in  the following  equivalent form
\begin{equation}\label{problem2}
\min\{J(m, \mu, u) + \mathbbm{1}_A(m, \mu, u): (m, \mu, u) \in \cX\},
\end{equation}
where $A := \{ (m, \mu, u) \in \cX;\,  m\, \text{solves}\,\eqref{system}\}$ and  $ \mathbbm{1}_A$ is the indicator function of the set $A$ defined as
$$
\mathbbm{1}_A(x) := \left\{\begin{array}{ll}0,& x \in A,\\+\infty&\text{otherwise.}\end{array}\right.
$$
A straightforward application of the direct method in Calculus of Variations gives the following existence result for
the minima of \eqref{problem2}.
\begin{theorem}\label{theo_min} Assume that
\begin{itemize}
\item $J:\cX \to \R\cup\{+\infty\} $ is bounded from below;
\item $J$ is lower semicontinuous in $\cX$, i.e. for any $(m_n,\mu_n, u_n) \subset \cX$  such that $(m_n,\mu_n, u_n) \to (m,\mu, u)$, it holds
$J(m,\mu, u)\leq \liminf_{n\to\infty}J(m_n,\mu_n, u_n);$
\item the set $A$ is closed under the topology induced by $\|\cdot\|_\cX.$
\end{itemize}
Then the minimization problem \eqref{problem1} has a solution.
\end{theorem}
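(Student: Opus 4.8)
The plan is to run the direct method of the Calculus of Variations on the penalised formulation \eqref{problem2}, which is equivalent to \eqref{problem1}: a triple $(m,\mu,u)\in\cX$ minimizes $J+\mathbbm{1}_A$ if and only if it belongs to $A$ and minimizes $J$ over $A$. Observe first that $A\neq\emptyset$: given any $\mu\in\cM^+_{M}(\G_T)$ and $u\in\cU$, Theorem~\ref{existencenet} produces a (unique) solution $m$ of the state system \eqref{system} with the prescribed data $m_0,\sigma_0$, so $(m,\mu,u)\in A$. Since $J$ is bounded from below and $A$ is nonempty, the value $\ell:=\inf_{\cX}\bigl(J+\mathbbm{1}_A\bigr)=\inf_A J$ is a finite real number; we select a minimizing sequence $(m_n,\mu_n,u_n)_n\subset\cX$, and since eventually $J(m_n,\mu_n,u_n)+\mathbbm{1}_A(m_n,\mu_n,u_n)\le\ell+1<+\infty$ we may assume $(m_n,\mu_n,u_n)\in A$ for every $n$, with $J(m_n,\mu_n,u_n)\to\ell$.

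The heart of the argument is the extraction of a convergent subsequence. Testing against $\phi\equiv1$ shows that the constraint $(m_n,\mu_n)\in\cM^+_{M}(\G_T)\times\cM^+_{M}(\G_T)$ bounds the total masses $m_n(\G_T),\mu_n(\G_T)$ by $M$. For the state measures one has in addition uniformly bounded supports: each $m_n$ solves \eqref{system} with the same data $m_0,\sigma_0$, and by \textbf{(H1)} the propagation speed never exceeds $V_{max}$, so $\supp{m_n}$ is contained in a fixed bounded subset of $\G_T$ depending only on $\supp{m_0}$, the source vertices and $T$; hence $(m_n)_n$ is tight. By weak-$\ast$ sequential compactness of bounded sets of measures (Prokhorov's theorem together with tightness for $m_n$, Banach--Alaoglu for $\mu_n$), and invoking the corresponding compactness of the sequence of controls $(u_n)_n$ in $\cU$ — which is available in the concrete models of Section~\ref{S2}, where the controls range over a bounded finite-dimensional parameter set — we obtain, along a subsequence that we do not relabel, $m_n\to\bar m$, $\mu_n\to\bar\mu$ and $u_n\to\bar u$. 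The ball $\cM^+_{M}(\G_T)$ is closed for this convergence and positivity passes to the limit, so $(\bar m,\bar\mu,\bar u)\in\cX$.

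It remains to recognize $(\bar m,\bar\mu,\bar u)$ as a minimizer. Since $A$ is closed in the topology induced by $\|\cdot\|_\cX$ and $(m_n,\mu_n,u_n)\in A$ converges to $(\bar m,\bar\mu,\bar u)$, we conclude $(\bar m,\bar\mu,\bar u)\in A$, i.e.\ $\bar m$ solves \eqref{system}. Then lower semicontinuity of $J$ yields
\[
J(\bar m,\bar\mu,\bar u)\le\liminf_{n\to\infty}J(m_n,\mu_n,u_n)=\ell=\inf_A J,
\]
so $(\bar m,\bar\mu,\bar u)$ attains the infimum and is therefore a solution of the optimization problem \eqref{problem1}.

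I expect the compactness step to be the only genuine difficulty. Because the network carries terminal arcs of infinite length, $\G_T$ is not compact and a bound on the total mass alone does not rule out mass escaping to infinity; this is circumvented for the state measures by the finite-speed-of-propagation bound contained in \textbf{(H1)}, which forces uniformly compact supports. The external measures $\mu_n$ and the controls $u_n$, on the other hand, live in spaces for which no compactness is postulated in the abstract statement, so in full generality one must either add a coercivity/compactness hypothesis on the admissible pairs $(\mu,u)$ or — as in the applications of Section~\ref{S2} — exploit the finite-dimensional nature of the controls and the dynamics linking $\mu$ to $u$; accordingly, the semicontinuity and closedness assumptions should be understood relative to the (possibly weaker) topology for which this compactness holds.
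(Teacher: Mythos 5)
Your proof is correct and follows exactly the route the paper intends: the paper gives no written proof of Theorem~\ref{theo_min}, remarking only that it is a ``straightforward application of the direct method'' (minimizing sequence in $A$, compactness, closedness of $A$, lower semicontinuity of $J$), which is precisely your argument. Your observation that sequential compactness of the minimizing sequence is not among the stated hypotheses and must be imported from the structure of $\cM^+_{M}(\G_T)$ and $\cU$ is accurate --- that is exactly the role Lemmas~\ref{comp1} and~\ref{comp2} play in the concrete models of Section~\ref{S2}.
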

A typical example of functional  to be minimized  is     of the form
\begin{equation}\label{functional}
J(m,\mu,u) := - \int_0^T\int_\G v[m_t, \mu_t]dm_t(y)dt + \int_{\G\times[0,T]}f(x,t,u)dm(x,t),
\end{equation}
where the first  term in \eqref{functional} represents the mean velocity on the network, while    the second  one
is a feedback term which depends on the choice of $f$. For example, if $f(t,x,u)= \chi_B(x)$, where $B \subset \G$ is closed, the functional minimizes the amount of mass $m_t$ in a closed region $B$ during the time interval $[0,T]$.
Another interesting class of control problems are minimum time control introduced, in a measure theoretic setting, in \cite{cavagnari1,cavagnari2}.
\section{Model examples: traffic lights and autonomous cars}\label{S2}
This section is devoted to    applications of the abstract setting previously described   with the discussion of two significative
problems in traffic flow optimization. In the first example,  we   optimize the duration of traffic lights in order  to improve the circulation on the road network; in the second example, we aim to regulate the traffic flow by a fleet of autonomous car. \\
 For both these models we assume that the control variable $u$ influences the traffic flow distribution $m$ only by means of  an    external distribution $\mu=\mu [u]$. Hence the functional to be minimized in \eqref{problem2} is  of the form $J(m,u)$ with $m$ subject to \eqref{system} and $\mu$ determined by another dynamical system for a given initial configuration $\mu_0$.

\subsection{Smart traffic lights}
An important element of a road network   model is given by  \emph{traffic lights}: they influence the behavior of the drivers near the junction  and can be used as an external control to regulate the traffic flow. To model a traffic light,  we follow the approach in \cite{Got_Ute}. Relying on the measure-theoretic setting, we describe a traffic light as a measure   $\theta\in \cM^+(\G_T)$, which is a Dirac measure in space and a densirty with bounded variation in time.\\
We assume that there is at most one  traffic light for each road and that it is located closed to the  terminal  vertex $V^i\in \cV$ of the  arc $e_j$. Since the position is fixed a priori while the activity changes in time, a traffic light can be represented, with an abuse of notation, as the measure
\begin{equation}\label{traffic}
\sum_{j\in \inc(V_i)} \int_{0}^T u_j(t) \delta_{V^i}(y)dt,
\end{equation}
where    $u_j \in BV([0,T], \{0,1\})$    is a function representing the state of the
traffic light:  $u_j(t) = 1$ if the light is red, $u_j(t) = 0$ if  green  (for simplicity, we do not  consider  a yellow phase since  the   corresponding driver reaction is strongly influenced by  drivers' culture). \par
Concerning the light  phases,  in order   to exclude  unrealistic scattering phenomena,
we fix two positive times $T^R_i, T^G_i>0$ and we assume that the red phase    cannot  last more then $T^R_i$ and, analogously, the green phase must last at least $T^G_i$ to guarantee a proper traffic flow.
Hence denoted by    $\tau_1, \tau_2 \in [0,T]$ two consecutive switching times     of the traffic light  on the arc $e_j$ (corresponding to jump discontinuities  of $u_j$),  we assume that
\begin{equation}\label{hyp_light1}
\begin{split}
   \text{ if $u_j(\tau^+_1) = 1$, then $|\tau_1 - \tau_2| < T^R_i,$}\\
   \text{if $u_j(\tau^+_1) = 0$, then $ |\tau_1 - \tau_2|> T^G_i$.}
   \end{split}
\end{equation}
Moreover we assume that a  traffic light can be green only for one of the incoming roads in a junction, i.e.  
\begin{equation}\label{hyp_light2}
\begin{array}{l}
\sum_{j \in \inc(V^i) }u_j +1 =N\\[8pt]
T^R_i\ge(N-1)T^G_i
\end{array}
\end{equation}
where  $N=\#(\text{Inc}(V_i))$.\\
Denote by $\cF\subset \cE$  the set of the arcs   containing  a traffic light. Recalling    \eqref{traffic}, we consider the    measure $ \mu(x,t)=\sum_{j\in J}  u_j(t) \mu^j(x,t)$ on $\G_T$ where
$\mu^j(x,t)\equiv 0$ if $e_j\not\in \cF$ and
$\mu^j (x,t)= \delta_{V^i}(x)  dt$  if $e_j \in \cF\cap \text{Inc}(V_i)$. The term $u_j$, the phase duration  of the traffic light on the road $e_j$,
can be interpreted as the control variable. The set of admissible controls is given by
 \begin{equation}\label{control_light}
 \cU = \{u= \{u_j \}_{j=1,\dots,|\cE|}: \, u_j \in BV([0,T], \{0,1\})\,  \text {and satisfies  \eqref{hyp_light1},  \eqref{hyp_light2} }\}
\end{equation}
To describe the interaction of the drivers with the traffic lights, we define  an   external velocity term $v_e[\mu]$ in \eqref{velocity}. Fixed an arc $e_j\in\cF\cap\text{Inc}(V_i)$,  then  the restriction of $v_e[\mu]$ to the arc $e_j$ is given by

$$
v^j_e[\mu](x) := \int_{\G}H(x,y)d\mu_t(y)=u_j(t)H(x,V_i)\delta_{e_j}(x).
$$
We  assume that the interaction kernel   $H$ is  given   by
\begin{equation}\label{ker_light}
H(x,y) =\left\{ \begin{array}{ll}
v_f\max\left\{\left(1-  \frac{  d_{\G}(x,y)}{R}\right),0 \right\},& \text{if $x\to y$, $d_{\G}(x,y)\leq R$},\\[6pt]
0&\text{otherwise},
\end{array}
\right.
\end{equation}
where $v_f$ is the desired velocity and  $R\le L_0$, for $L_0$ as in \eqref{min_length}, is the visibility radius.
The driver interaction  with the traffic light, tuned by the signal $u_j$, occurs only if the driver is sufficiently close  to the junction and   becomes stronger getting closer.\\
We need to show that the chosen set of control \eqref{control_light} satisfies the hypotheses of Theorem \ref{theo_min} for $\cX = \cM^+_{M}(\G_T)\times\cM^+_{M}(\G_T)\times \cU$.
\begin{lemma}\label{comp1}
The set of positive measures with bounded mass $\cM^+_{M}(\G_T)$ is compact with respect to $\|\cdot\|_{BL}^*$.
\end{lemma}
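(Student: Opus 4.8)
The plan is to show that $\bigl(\cM^+_M(\G_T),\,d_{BL}\bigr)$, with $d_{BL}(\mu,\nu):=\normBL{\mu-\nu}$, is a complete and totally bounded metric space, and then to invoke the metric characterisation of compactness. Completeness is the soft part: $\bigl(\cM^+(\G_T),\normBL{\cdot}\bigr)$ is complete, $\G_T$ being a complete separable metric space, and $\cM^+_M(\G_T)$ is a closed subset of it. Indeed, for a positive measure $\mu$ one has $\normBL{\mu}=\dual{\mu}{1}=\mu(\G_T)$ --- the lower bound by testing against $\phi\equiv 1$, which satisfies $\|1\|_{BL}=1$, and the upper bound from $\abs{\dual{\mu}{\phi}}\le\|\phi\|_\infty\,\mu(\G_T)\le\|\phi\|_{BL}\,\mu(\G_T)$ --- so the constraint $\normBL{\mu}\le M$ is preserved under $d_{BL}$-convergence by the reverse triangle inequality, and a closed subspace of a complete metric space is complete.

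The substance is total boundedness, which I would obtain by a standard atomic approximation. Fix $\varepsilon>0$ and put $\delta:=\varepsilon/(2M)$. Partition $\G_T$ into finitely many Borel pieces $A_1,\dots,A_N$ of $d$-diameter at most $\delta$, pick $z_i\in A_i$, and assign to each $\mu\in\cM^+_M(\G_T)$ the atomic measure $\nu_\mu:=\sum_{i=1}^N\mu(A_i)\,\delta_{z_i}$. For $\phi\in BL(\G_T)$ with $\|\phi\|_{BL}\le 1$ one has $\operatorname{Lip}(\phi)\le 1$, whence
\[
\begin{aligned}
\abs{\dual{\mu-\nu_\mu}{\phi}}
&\le\sum_{i=1}^N\int_{A_i}\abs{\phi(w)-\phi(z_i)}\,d\mu(w)\\
&\le\sum_{i=1}^N\operatorname{diam}(A_i)\,\mu(A_i)
\le\delta\,\mu(\G_T)\le\delta M=\frac{\varepsilon}{2},
\end{aligned}
\]
so $d_{BL}(\mu,\nu_\mu)\le\varepsilon/2$. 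Hence $\cM^+_M(\G_T)$ lies in the $(\varepsilon/2)$-neighbourhood of $\mathcal{A}:=\bigl\{\sum_{i=1}^N a_i\delta_{z_i}:a_i\ge 0,\ \sum_{i=1}^N a_i\le M\bigr\}$, and on $\mathcal{A}$ the metric $d_{BL}$ is dominated by the $\ell^1$-distance of the coefficient vectors, because $\|\phi\|_\infty\le 1$ gives $\normBL{\sum_i(a_i-b_i)\delta_{z_i}}\le\sum_i\abs{a_i-b_i}$. As $\{a\in\R^N:a_i\ge 0,\ \sum_i a_i\le M\}$ is a bounded subset of $\R^N$ it is totally bounded, hence so is $\mathcal{A}$ for $d_{BL}$; covering $\mathcal{A}$ by finitely many $d_{BL}$-balls of radius $\varepsilon/2$ and enlarging each to radius $\varepsilon$ yields a finite $\varepsilon$-net of $\cM^+_M(\G_T)$. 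Being complete and totally bounded, $\cM^+_M(\G_T)$ is compact. (At a higher level this is just Prokhorov's theorem: a bounded family of positive measures on a compact space is tight, hence relatively weakly compact, and on a separable metric space weak convergence is metrised by $d_{BL}$.)

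The genuinely delicate point --- and the step I expect to be the main obstacle --- is the existence of the finite $\delta$-partition $\{A_i\}$, i.e.\ the total boundedness of $(\G_T,d)$. This is immediate when every arc of $\G$ has finite length, but not for the network adopted here, whose terminal arcs are infinite, since mass may then escape along an unbounded edge and $\cM^+_M(\G_T)$ is not compact in the literal sense. I would circumvent this by noting that every measure that actually feeds into the state equation \eqref{system} and the cost \eqref{functional} is supported in a bounded region: the data $m_0$, $\sigma_0$ and the external distribution $\mu$ (traffic lights localised at the junctions, or vehicles moving at finite speed) live near the junctions, and by \textbf{(H1)} together with $T<+\infty$ the transported mass cannot travel farther than $V_{max}T$; one may therefore replace $\G$ by the compact subnetwork obtained by truncating each infinite arc at any length exceeding $V_{max}T$ plus the diameter of the union of the supports of $m_0$ and $\sigma_0$, and run the argument above on it --- this is the bounded portion of $\G_T$ relevant to the optimisation problem \eqref{problem1}.
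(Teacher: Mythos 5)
Your argument is correct in its core but follows a genuinely different route from the paper's. The paper disposes of the lemma in two lines: the mass bound gives $\abs{\mu}_{TV}\le M$, Banach--Alaoglu yields weak-$*$ compactness of this ball, and one then invokes the fact that for bounded sequences of positive measures weak-$*$ convergence upgrades to $\normBL{\cdot}$-convergence. You instead prove compactness metrically, as completeness of the cone $\cM^+(\G_T)$ under $\normBL{\cdot}$ plus total boundedness via a finite partition of $\G_T$ into small-diameter cells and atomic approximation. The paper's route is shorter and softer; yours is quantitative, exhibits explicit finite $\varepsilon$-nets, and --- crucially --- makes visible the hypothesis both arguments secretly need, namely that $\G_T$ itself is totally bounded.

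The obstruction you flag is genuine and is worth stating plainly: the paper assumes the terminal arcs have infinite length, so $\G_T$ is not compact and the lemma as literally stated fails. Taking $x_n\to\infty$ along an unbounded arc, the Dirac masses $\delta_{x_n}$ lie in $\cM^+_1(\G_T)$ and converge to $0$ weakly-$*$ in $C_0(\G_T)^*$ (which is all Banach--Alaoglu delivers), yet $\normBL{\delta_{x_n}}=1$ for every $n$ and no subsequence is $\normBL{\cdot}$-Cauchy; the paper's final implication ``weak-$*$ compactness implies $\normBL{\cdot}$-compactness'' is exactly where mass escaping to infinity is overlooked, so its proof has the same gap yours would have without repair. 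Your remedy --- truncating the infinite arcs at a length determined by $V_{max}T$ and the supports of $m_0$, $\sigma_0$, $\mu_0$, where all measures relevant to \eqref{system} and \eqref{problem1} actually live --- is the natural fix and restores both proofs. One minor caveat: your appeal to completeness of $\bigl(\cM^+(\G_T),\normBL{\cdot}\bigr)$ is a nontrivial known result (it holds for the positive cone over a complete separable metric space, though not for the whole signed space $\cM(\G_T)$), and it deserves a citation rather than the word ``soft''; after truncation it is anyway subsumed by closedness inside the compact set you construct.
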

\begin{lemma}\label{comp2}
The set $\cU$ defined  in \eqref{control_light} is compact in $(BV^{|\cE|}([0,T]),\|\cdot\|_{L^1})$.
\end{lemma}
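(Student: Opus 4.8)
The plan is to exhibit $\cU$ as a sequentially compact subset of the metric space $(BV^{|\cE|}([0,T]),\|\cdot\|_{L^1})$ (being a metric space, sequential compactness suffices) through a Helly-type selection argument, the key point being that the lower bound on the green phases forces a uniform bound on the number of switchings. First I would fix, for each $j=1,\dots,|\cE|$, the terminal vertex $V_i$ of $e_j$, set $T^G_0:=\min_i T^G_i>0$ (a finite minimum over the junctions), and observe that since $u_j$ is $\{0,1\}$-valued its total variation equals its number of switching times in $[0,T]$. By \eqref{hyp_light1} each green phase has length at least $T^G_0$, so $[0,T]$ accommodates at most $\lceil T/T^G_0\rceil$ disjoint green phases and hence $u_j$ has at most $N_0:=2\lceil T/T^G_0\rceil+2$ switchings. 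Therefore $\operatorname{Var}(u_j)\le N_0$ and $\|u_j\|_\infty\le1$ uniformly over $u\in\cU$ and $j$.

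Given a sequence $(u^{(n)})_n\subset\cU$, $u^{(n)}=\{u^{(n)}_j\}_j$, I would apply Helly's selection theorem to each coordinate: from $(u^{(n)}_j)_n$ extract a subsequence converging everywhere on $[0,T]$ to some $u_j\in BV([0,T])$ with $\operatorname{Var}(u_j)\le N_0$ and, since $\{0,1\}$ is closed, $\{0,1\}$-valued. Iterating over the finitely many indices yields one subsequence, still written $(u^{(n)})_n$, with $u^{(n)}_j\to u_j$ pointwise for all $j$; dominated convergence (by the constant $1$) upgrades this to convergence in $L^1([0,T])$, so $u^{(n)}\to u:=\{u_j\}_j$ in $(BV^{|\cE|}([0,T]),\|\cdot\|_{L^1})$.

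There remains to verify $u\in\cU$, i.e.\ that $u$ still satisfies \eqref{hyp_light1}--\eqref{hyp_light2}; this is where I expect the only real difficulty. The constraint \eqref{hyp_light2} is a pointwise (in $t$) linear identity among the $u_j$, $j\in\inc(V_i)$, together with a condition on the fixed parameters $T^R_i,T^G_i$ alone, hence it survives pointwise convergence trivially. For \eqref{hyp_light1} I would encode each $u^{(n)}_j$ by its initial value $u^{(n)}_j(0^+)\in\{0,1\}$ and its ordered list of switchings, padded to the fixed length $N_0$ by repeating the value $T$; passing to a further subsequence these data converge, and the limit is precisely the corresponding data of $u_j$, where coinciding consecutive switching times signal a phase that has collapsed to length $0$ in the limit. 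The inequalities in \eqref{hyp_light1} between consecutive switching times then pass to the limit, the only care needed being that a green phase cannot collapse (its length stays $\ge T^G_i>0$) while a collapsed red phase trivially still obeys its length bound --- provided this bound is read as the closed inequality $|\tau_1-\tau_2|\le T^R_i$, since a strict bound is not stable under limits; otherwise one simply replaces $\cU$ by its (still small) closure. This gives $u\in\cU$, so $\cU$ is compact.
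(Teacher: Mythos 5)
Your proof is correct and follows essentially the same route as the paper's: a uniform bound on the number of switchings coming from the lower bound $T^G$ on the green phases, followed by extraction of a convergent vector of switching times (the paper does this directly by Bolzano--Weierstrass rather than via Helly, but that is only packaging) and the observation that convergence of the switching times of $\{0,1\}$-valued step functions yields $L^1$ convergence. You are in fact more careful than the paper on a point it silently skips: one must check that the limit control still satisfies \eqref{hyp_light1}, and since those constraints are \emph{strict} inequalities the set $\cU$ as written is only precompact, so your remark that the phase-length bounds must be read as closed inequalities (or $\cU$ replaced by its closure) identifies a real, if minor, imprecision in the statement.
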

\begin{lemma}\label{closed}
Assume $\cX = \cM^+_{M}(\G_T)\times\cM^+_{M}(\G_T)\times \cU$, where $\cU$ satisfies the hypothesis of Lemma \ref{comp2}. The set $A$ is closed under the topology induced by $\|\cdot\|_{\cX}.$
\end{lemma}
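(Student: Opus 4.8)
The statement is a sequential‑closedness property, so the plan is: take $(m_n,\mu_n,u_n)\in A$ with $(m_n,\mu_n,u_n)\to(m,\mu,u)$ in $(\cX,\|\cdot\|_\cX)$, and show $(m,\mu,u)\in A$. First, the limit point belongs to $\cX$: by Lemma \ref{comp1} the set $\cM^+_M(\G_T)$ is closed (indeed compact) for $\|\cdot\|_{BL}^*$, hence $m,\mu\in\cM^+_M(\G_T)$, while by Lemma \ref{comp2} the set $\cU$ is closed, hence $u\in\cU$. If, as in the concrete traffic‑light problem, membership in $A$ also encodes the correspondence $\mu=\mu[u]$, this too passes to the limit: the map $u\mapsto\mu[u]$ built from the fixed Dirac‑in‑space measures of \eqref{traffic} is linear and continuous from $L^1$ to $\|\cdot\|_{BL}^*$, since for $\|\phi\|_{BL}\le1$ one has $|\dual{\mu[u_n]-\mu[u]}{\phi}|\le\sum_j\int_0^T|u_{n,j}(t)-u_j(t)|\,dt\to0$; hence $\mu_n=\mu[u_n]\to\mu[u]$ and, by uniqueness of the limit, $\mu=\mu[u]$.

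By Theorem \ref{existencenet}, for the fixed data $m_0,\sigma_0$ every external distribution $\mu\in\cM^+_M(\G_T)$ determines a unique measure‑valued solution $S(\mu)$ of \eqref{system}; thus, up to the free $\cU$‑factor, $A$ is the graph of the solution operator $S$, and it suffices to prove that $S$ is continuous for $\|\cdot\|_{BL}^*$. Granting this, $m_n=S(\mu_n)\to S(\mu)$ since $\mu_n\to\mu$, and comparing with $m_n\to m$ gives $m=S(\mu)$, i.e.\ $(m,\mu,u)\in A$. Continuity of $S$ is the continuous‑dependence statement behind \cite{CDTnonloc}: $S(\mu)$ is the fixed point of a solution map which, by \textbf{(H1)}--\textbf{(H2)}, is a contraction on short time intervals with contraction modulus measured by $\|\cdot\|_{BL}^*$ in the state variable, while \textbf{(H2)} also gives a $\|\mu_1-\mu_2\|_{BL}^*$‑Lipschitz dependence of the velocity field, hence of the fixed point; Gronwall's lemma then propagates the estimate from short times to all of $[0,T]$, by arguments parallel to those yielding Theorem \ref{existencenet}(i).

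A more hands‑on alternative is to pass to the limit directly in the definition of measure‑valued solution of \cite{CDTnonloc}. From $\|m_n-m\|_{BL}^*\to0$, the time‑regularity of Theorem \ref{existencenet}(ii), and the compactness of $\cM^+_M(\G)$ (Lemma \ref{comp1}), one extracts a subsequence along which the disintegrations satisfy $m_{n,t}\to m_t$ and $\mu_{n,t}\to\mu_t$ in $\|\cdot\|_{BL}^*$ for a.e.\ $t\in[0,T]$; then \textbf{(H1)}--\textbf{(H2)} give $\|v^j[m_{n,t},\mu_{n,t}]-v^j[m_t,\mu_t]\|_\infty\to0$ together with the uniform bound $0\le v^j\le V_{max}$, so one may pass to the limit by dominated convergence in the transport equation on each arc, the transition conditions at the vertices being preserved because the distribution matrix $P(t)$ is independent of $n$. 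I expect the genuinely delicate point, in either approach, to be the stability of the vertex traces $m^k_{V_i=\pi_k(1)}$ and $m^j_{V_i=\pi_j(0)}$ entering the junction conditions, since traces of measures need not be continuous under $\|\cdot\|_{BL}^*$‑convergence; this is exactly where the network‑specific analysis of \cite{CDTnonloc} is needed.
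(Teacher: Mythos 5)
Your proposal is correct and follows essentially the same route as the paper: closedness of the control component via the compactness of $\cU$ (Lemma \ref{comp2}), and closedness of the state component via continuous dependence of the measure-valued solution on the control/external measure, which the paper delegates entirely to Lemma 4.1 of \cite{Bon_Butt} and the stability results of \cite{CDTnonloc}. You simply supply more of the details the paper leaves to citations --- notably the $L^1\to\|\cdot\|_{BL}^*$ continuity of $u\mapsto\mu[u]$ and the Gronwall/fixed-point sketch for the solution operator --- and you correctly identify the vertex-trace stability as the point where the network-specific analysis of \cite{CDTnonloc} is genuinely needed.
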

The proofs of the previous results are given in Appendix.


\subsection{Regulating  traffic flow by means of autonomous cars}
In this second application, we aim to optimize the traffic flow by exploiting another distribution of cars, possibly given by autonomous vehicles,  of which we can control the velocity. Indeed some experiments (see Work \cite{Work}) have shown that it is possible to avoid stop-and-go phenomena regulating
the  interactions among drivers by means of external agents (autonomous vehicles, traffic light, signaling panels,etc.).
The approach in this section is inspired to   \cite{Bon_Butt}  where the authors present an optimization problem for a transport equation in  the euclidean space with the  control   represented by a second distribution $\mu$  evolving according to another  transport equation.\\
The dynamics of the autonomous cars is similar to the ones of rest of the driver, with the difference that it can be controlled
in order to minimize the objective functional. Hence for a given initial distribution $\mu_0$  (typically $\mu_0=\sum_{V_i\in \G_a}\delta_{V_i}$ for some finite set $\G_a\subset\G$), the measure $\mu\in\G_T$ representing the distribution of the  fleet of the autonomous car satisfies the nonlinear transport equation
\begin{equation}\label{google}
\left\{	\begin{array}{lr}
		\partial_{t}\mu^{j}+\partial_x(u\cdot v^j[m_t,\mu_t]\mu^j)=0\quad & x\in e_j,\,t\in (0,\,T],\, j=1,\dots,|\cE|  \\[2mm]
		\mu_{t=0}^j=\mu_0^j &   x\in e_j,  j=1,\dots,|\cE| \\[2mm]
		\mu^{j}_{V_i=\pi_j(0)}=
			\begin{cases}
				\sum\limits_{e_k\in\inc(V_i)} q_{kj}(t)\mu_{V_i=\pi_k(1)}^k & \text{if } V_i\in\cI \\[3mm]
			       0 & \text{if } V_i\in\cS,
			\end{cases}  &j=1,\dots,|\cE|
	\end{array}
\right.
\end{equation}
We assume that the velocity fields $v[m_t,\mu_t]$ in \eqref{google} is the same of problem \eqref{system}
and it is defined   as in \eqref{velocity}.
On the other side, since we want to regulate the velocity  of the  distribution $\mu$  we add a control   term $u$ and we assume that the control set is given by
\begin{equation}\label{control_autonomous}
 \cU = \text{Lip}_L(\G_T, [0,1]),
\end{equation}
i.e. the set of Lipschitz functions from $\G\times[0,T]$ to $[0,1]$ with Lipschitz constant   $L>0$. In this way,  if $v[m_t,\mu_t]$ satisfies the assumptions of Theorem \ref{existencenet}, then also $u\cdot   v[m_t,\mu_t]$ satisfies the same assumptions and therefore system \eqref{google}, given $(m_t)_{t \in [0,T]}$, admits a unique measure-valued solution. Moreover, since we require that  $u(x,t) \in [0,1]$, then the autonomous cars  can only slow the traffic distribution.
Observe that  system \eqref{google}  also differs from \eqref{system} for  the distribution matrix $Q=(q_{kj}(t))_{k,j = 1}^{|\cE|}$  at the junctions. Actually it is  reasonable to assume  that  $Q$ does not coincide with the distribution matrix $P$ since
the autonomous cars can behave differently from the rest of the drivers at the junctions. We assume that   the matrix $Q$ satisfies the assumptions in \eqref{distr_matrix}. Hence, the existence of solutions $(m, \mu)$ of the coupled transport system follows by a standard fixed point argument.\\

We conclude this section with the following Lemma:
\begin{lemma}\label{closed_2}
Assume $\cX = \cM^+_{M}(\G_T)\times\cM^+_{M}(\G_T)\times \cU$, where $\cU$ is defined by \eqref{control_autonomous}. The set $A$ is closed under the topology induced by $\|\cdot\|_{\cX}.$
\end{lemma}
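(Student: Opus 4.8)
The plan is to prove sequential closedness of $A$ directly: given a sequence $(m_n,\mu_n,u_n)\in A$ converging to $(m,\mu,u)$ in $\cX$, show that $(m,\mu,u)\in A$, i.e.\ that the limit pair $(m,\mu)$ solves the coupled system \eqref{system}--\eqref{google}. The essential point, as in the proof of Lemma \ref{closed}, is to pass to the limit in the weak (measure-valued) formulation of the two transport equations, so first I would write out these weak formulations explicitly, integrating against test functions $\varphi\in C^1_c(\G_T)$ that respect the junction transmission conditions.

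The first step is to handle the $u$-dependence, which is the new feature compared to Lemma \ref{closed}. Since $\cU=\mathrm{Lip}_L(\G_T,[0,1])$, the sequence $u_n$ is uniformly bounded and equi-Lipschitz; convergence $u_n\to u$ in $\cU$ (together with Arzel\`a--Ascoli, if one only had a subsequential statement) gives uniform convergence on $\G_T$, and the limit $u$ again lies in $\mathrm{Lip}_L(\G_T,[0,1])$ because both the bound $[0,1]$ and the Lipschitz constant $L$ pass to uniform limits. Next, from $\|\mu_n-\mu\|^*_{BL}\to 0$ and $\|m_n-m\|^*_{BL}\to 0$ one gets, via hypothesis \textbf{(H2)}, that $v^j[(m_n)_t,(\mu_n)_t]\to v^j[m_t,\mu_t]$ and hence $u_n\cdot v^j[(m_n)_t,(\mu_n)_t]\to u\cdot v^j[m_t,\mu_t]$, with uniform control in $x,t$ coming from \textbf{(H1)} and $u_n\le 1$. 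Combined with the convergence of the measures themselves, this lets one pass to the limit in each term $\int u_n v^j[(m_n)_t,(\mu_n)_t]\partial_x\varphi\,dm_n^j$ and $\int u_n v^j\partial_x\varphi\,d\mu_n^j$, and likewise in the linear transport terms, the initial-datum terms, and the source terms. One then invokes uniqueness from Theorem \ref{existencenet} (applied to the modified velocity $u\cdot v$, which satisfies the same hypotheses by the remark following \eqref{control_autonomous}) together with the fixed-point argument for the coupled system to conclude $(m,\mu)$ is the solution, so $(m,\mu,u)\in A$.

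The delicate point, exactly as in Lemma \ref{closed}, is the junction transmission condition: it involves the traces $m^k_{V_i=\pi_k(1)}$ and $\mu^k_{V_i=\pi_k(1)}$ of the measures at the vertices, and boundary traces of measures are not continuous under $\|\cdot\|^*_{BL}$ in general. The way around this is to use the regularity estimate Theorem \ref{existencenet}(ii), which gives equi-continuity in time of $t\mapsto (m_n)_t$ (and the analogous estimate for $\mu_n$), so that the flux across each vertex is controlled uniformly in $n$; one then tests the weak formulation with functions concentrated near a vertex, uses the conservation of mass at internal junctions guaranteed by \eqref{distr_matrix}, and passes to the limit in the distribution-matrix identities using $p_{kj},q_{kj}\in BV([0,T])$ and the fact that $p_{kj}\cdot m_n$, $q_{kj}\cdot\mu_n$ retain no Cantorian part. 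I expect this trace-passage at the junctions to be the main obstacle; however, since \eqref{google} has the same structure as \eqref{system} and the control $u$ enters only multiplicatively in a way that preserves \textbf{(H1)}--\textbf{(H2)}, the argument should follow the one already established for Lemma \ref{closed} almost verbatim, with the additional (and easy) bookkeeping for $u_n\to u$ described above. Finally I would note that $\cX$ is a product of the compact set $\cM^+_M(\G_T)$ (Lemma \ref{comp1}) with itself and the closed set $\cU$, so $A\subset\cX$ being closed is exactly what is needed to apply Theorem \ref{theo_min}.
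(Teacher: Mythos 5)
Your proposal is correct and follows essentially the same route as the paper, whose own proof of Lemma \ref{closed_2} is compressed into a citation: it simply says to repeat the proof of Lemma \ref{closed} (which itself defers the limit passage in the measure components to Lemma 4.1 of \cite{Bon_Butt} and to \cite{CDTnonloc}) with the Ascoli--Arzel\`a theorem replacing Lemma \ref{comp2} for the control component. Your explicit passage to the limit in the weak formulation, your direct observation that $\mathrm{Lip}_L(\G_T,[0,1])$ is closed under uniform convergence (which is all that is actually needed there), and your flagging of the vertex-trace issue are precisely the details those citations are standing in for.
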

This result can be proven as in the proof of Lemma \ref{closed}, using the Ascoli-Arzel\`a Theorem instead of Lemma \ref{comp2}.


\section{Numerical solution via optimality conditions}\label{S3}
In this section we formally derive first-order optimality conditions for the optimization problem \eqref{problem1} in the case of a traffic light for a 2-1 junction.
Then we build a gradient descent adjoint-based method to approximate the solution of the discretized optimality system and present some numerical experiments.

\subsection{Optimality conditions}
We  consider a network $\Gamma$ composed of a  junction with two roads converging in a single one, namely we have
$\cE=\{e_1,e_2,e_3\}$, $\cV=\{V_0,V_1,V_2,V_3\}$ and $\cJ=\{V_0\}$, $\cS=\{V_1, V_2\}$, $\cW=\{V_3\}$, $\text{Inc}(V_0) = \{e_1, e_2\}$ and
$\text{Out} (V_0)= \{e_3\}$, as shown in Figure \ref{fig:junction}.
\begin{figure}[!h]
\centering
\includegraphics[scale=0.5]{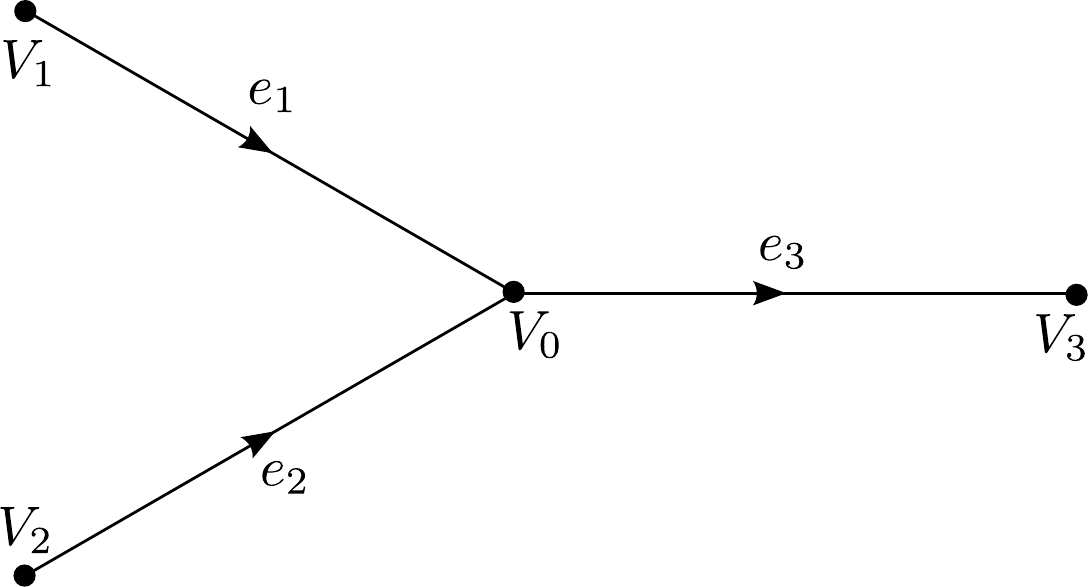}
\caption{Example of 2-1 junction}\label{fig:junction}
\end{figure}

To simplify the presentation, we neglect the drivers interaction term, since the computation in the general case is similar but more involved.
We place a traffic light  at $V_0$ in order to maximize the average speed on the network. In this setting a single control $u \in BV([0,T], \{0,1\})$
is enough to describe the system, indeed we define edge-wise the velocity $v$ by
$$v^1[u](x,t)=\max\{v_f^1(x)-u(t)H(x,V_0), 0\}\,,$$
$$v^2[u](x,t)=\max\{v_f^2(x)-(1-u(t))H(x,V_0), 0\}\,,$$
$$v^3(x,t)=v^3_f(x)\,,$$
where for $j=1,2,3$, $v_f^j$ is the free flow speed on $e_j$ and $H$ is defined as in \eqref{ker_light}.\\
Since the switching of the traffic light is intrinsically a discrete process, we translate the control problem into a finite dimensional setting.
More precisely, we consider a vector $s=(s_1,...,s_S)\in\R^{S}$, whose components represent
the durations of $S-1$ successive switches, where the integer number $S>1$ is fixed a priori. Then the control $u(t)$ is easily reconstructed from a given value $u(0)=u_0\in\{0,1\}$ at initial time
and from the switching times $\tau_i=\sum_{k=1}^i s_i$ for $i=1,...,S$. Defining recursively $u_i=1-u_{i-1}$ for $i=1,...,S$ and $\tau_0=0$ we set
(see Figure \ref{fig:control})
$$
u(t)=u^s(t)=\sum_{i=0}^{S-1} u_i\chi_{[\tau_i,\tau_{i+1})}(t)
$$
\begin{figure}[!h]
\centering
\includegraphics[scale=0.5]{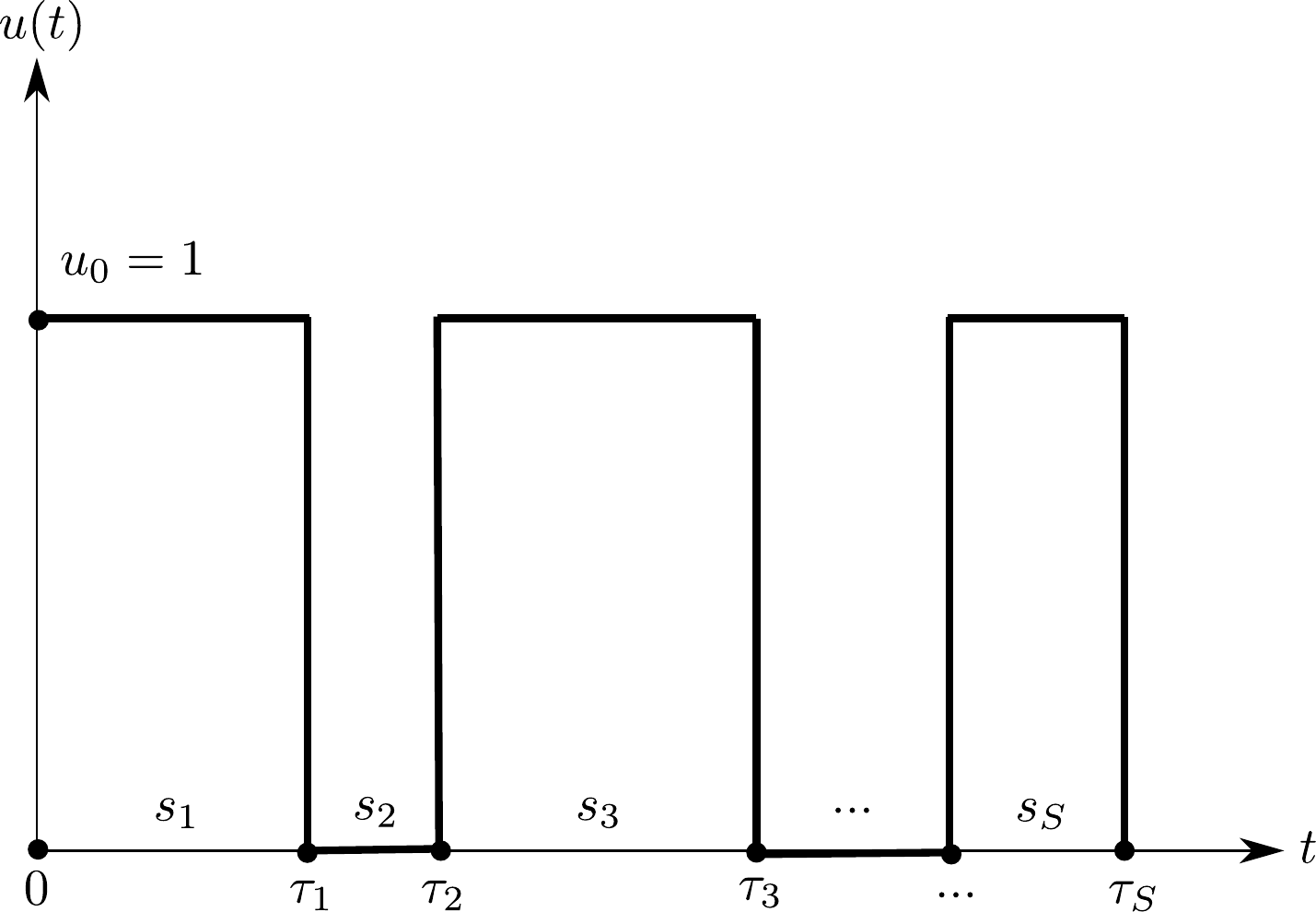}
\caption{Reconstruction of control $u$ from switching durations $s=(s_1,...,s_S)$}\label{fig:control}
\end{figure}

Following this approach we avoid several difficulties. Indeed, $BV([0,T], \{0,1\})$ is not even a vector space and taking admissible variations of a given control or
imposing constraints on the switching durations is in practice not easy at all.
One could work instead with the convex subset $BV([0,T];[0,1])$ of $L^2(0,T)$ and look for bang-bang controls.
This can prevent unrealistic mixing of mass at the junction, due to the additional yellow phase for the traffic light (intermediate values in $(0,1)$),
but chattering phenomena can occur.
In our setting we just work in $\R^S$, chattering is not allowed by construction, and we can easily apply variations/constraints to the switching durations
being sure that the control always remains in $BV([0,T], \{0,1\})$.\\
Assuming that the measure $m$ has a density, i.e. $dm=m(x,t)dx\,dt$ for some function $m:\Gamma\times [0,T]\to \R$,
we want to minimize the cost functional
\begin{equation}\label{mean_vel}
  J(m,u^s)= -\int_0^T\int_\Gamma v[u^s](x,t)m(x,t)\,dxdt\,,
\end{equation}
subject to
 \begin{equation}\label{system_ex}
\left\{
\begin{array}{ll}
 \partial_t m^j+\partial_x (v^jm^j)=0 & \mbox{in }e_j\times(0,T),\,j=1,2,3\\
 m^j(\cdot,0)=m_0^j & \mbox{in } e_j
\end{array}
\right.\end{equation}
We also assume null incoming traffic in the network during the whole evolution, imposing
\begin{equation}\label{boundary}
m^1_{x=V_1}=0,\quad m^2_{x=V_2}=0,\quad t\in[0,T]\,,
\end{equation}
and the mass conservation condition at the internal vertex $V_0$
\begin{equation}\label{transition}
m^3_{x=V_0} = m^1_{x=V_0} + m^2_{x=V_0}.
\end{equation}
We formally apply the  method of Lagrange multipliers in order to derive first-order optimality conditions. We define the
Lagrangian as
\begin{align*}L(m,u^s,\lambda) &:=J(m,u^s)+\int_0^T\int_{\Gamma}(-\partial_t \lambda-v\partial_x\lambda)m\,dxdt\\
&+ \int_{\Gamma}(\lambda(x,T)m(x,T)-\lambda(x,0)m_0(x))\,dx\\
 &+\sum_{j=1,2,3}\int_0^T(\lambda^j(V_j^{E},t)v^j(V_j^{E},t)m^j(V_j^{E},t)-\lambda^j(V_j^{I},t)v^j(V_j^{I},t)m^j(V_j^{I},t))\,dt\,,
\end{align*}
where  $V_j^I$ and $V_j^E$ denote  the initial and, respectively,  the final vertex  of the arc  $e_j$. Observe that the terms involving the Lagrange multiplier $\lambda$ derive from the weak formulation of the transport equation on $\Gamma$.\\
We evaluate the derivates of the Lagrangian  with respect to $m$ and $s$ (recall that $u=u^s$). We first consider an admissible increment    $w$ for $m$ which preserves
the boundary and transition conditions, i.e.
 \begin{equation}\label{admissible}
 w^1(V_1,t)=0\,,\qquad w^2(V_2,t)=0\,,\qquad w^3(V_0,t)=w^1(V_0,t)+w^2(V_0,t)\qquad t\in[0,T]\,,
 \end{equation}
 and we compute
\begin{equation}\label{lagrangian}
\begin{split}
 \langle \partial_m L,w\rangle&=\int_0^T\int_{\Gamma}(-\partial_t \lambda-v\partial_x\lambda -v)w\,dxdt+\int_{\Gamma}\lambda(x,T)w(x,T)\,dx\\
 &+\int_0^T\sum_{j=1,2,3}(\lambda^j(V_j^{E},t)v^j(V_j^{E},t)w^j(V_j^{E},t)-\lambda^j(V_j^{I},t)v^j(V_j^{I},t)w^j(V_j^{I},t))\,dt\,.
 \end{split}
\end{equation}
Imposing $\langle\partial_m L,w\rangle=0$ for any admissible  $w$, we get the following time-backward advection equation with a source term
 \begin{equation}\label{eq_lambda}
 -\partial_t \lambda^j-v^j\partial_x\lambda^j =v^j\qquad \mbox{in}\quad e_j\times(0,T),\,j=1,2,3,
 \end{equation}
 and  the final condition
 $$\lambda^j(x,T)=0\qquad \mbox{in}\quad e_j,\,j=1,2,3.$$
Note that for   \eqref{eq_lambda},   $V_3$  is an inflow vertex where  a boundary condition has to prescribed, while $V_1$ and $V_2$ are outflow ones.
Writing explicitly the remaining boundary terms in \eqref{lagrangian}, we have
\begin{align*}
\int_0^T(\lambda^1v^1w^1(V_0,t)-\lambda^1v^1w^1(V_1,t)+\lambda^2v^2w^2(V_0,t)\\
-\lambda^2v^2w^2(V_2,t)+\lambda^3v^3w^3(V_3,t)-\lambda^3v^3w^3(V_0,t))\,dt=0\,.
\end{align*}
By taking    $w$  compactly supported  in a neighborhood of $V_3$, we get the boundary condition
$$\lambda^3(V_3,t)=0\qquad \mbox{in}\quad [0,T]\,,$$
whereas for $w$ compactly supported  in a neighborhood of   $V_0$, recalling \eqref{admissible}, we get
\begin{equation}\label{boundary_terms}
 \int_0^T\{(\lambda^1v^1-\lambda^3v^3)w^1(V_0,t)+(\lambda^2v^2-\lambda^3v^3)w^2(V_0,t)\}\,dt=0\,.
\end{equation}
 The mass conservation condition  \eqref{transition} can be rewritten as
\[
   v^3(V_0,t)m^3(V_0,t)=v^1(V_0,t) m^1(V_0,t)+v^2(V_0,t)m^2(V_0,t)\qquad t\in[0,T]\,,
\]
since  the control law $u$ models  a traffic light which  bring to halt the speed of the drivers  at $V_0$  in $e_1$  and, alternatively,  in $e_2$,  in such a way that there is mass flow either from $e_1$ to $e_3$ or from  $e_2$   to $e_3$.
If $I_1\subseteq[0,T]$ is  an interval where $u(t)=1$ (red light for $e_1$), then in this interval the speed $v^1(V_0,t)$ is null and therefore
 $m^1(V_0,t)=0$   (recall that  mass concentration at the vertices is not admitted).
Similarly if $u(t)=0$ for $t\in I_2$ (red light for $e_2$),  we get  $m^2(V_0,t)=0$ for $t\in I_2$.
An admissible increment, in order to preserve the transition condition for $m$, has to satisfy the same property and by \eqref{boundary_terms}
we get
$$\lambda^3(V_0,t)v^3(V_0,t)= \lambda^1(V_0,t)v^1(V_0,t)+\lambda^2(V_0,t)v^2(V_0,t),$$
or, more explicitly,
$$\lambda^1(V_0,t)v^1(V_0,t)=\lambda^3(V_0,t)v^3(V_0,t) \qquad\mbox{if}\quad t\in\{v^1(V_0,t)\neq 0\}\,,$$
$$\lambda^2(V_0,t)v^2(V_0,t)=\lambda^3(V_0,t)v^3(V_0,t) \qquad\mbox{if}\quad t\in\{v^2(V_0,t)\neq 0\}\,.$$

We now compute  the derivative of $L$ with respect to $u^s$ for an increment $\varphi\in\R^S$
\begin{align*}
\langle \partial_s L,\varphi\rangle&=-\int_0^T\int_{\Gamma}\partial_s v\cdot \varphi(\partial_x\lambda+1)m\,dx dt
+\int_0^T\{\sum_{j=1,2,3}\lambda^j(V_j^{E},t)\partial_s v^j(V_j^{E},t)\cdot \varphi\, m^j(V_j^{E},t)\\
&-\lambda^j(V_j^{I},t)\partial_s v^j(V_j^{I},t)\cdot \varphi\, m^j(V_j^{I},t)\}\,dt\,.
\end{align*}
Recalling \eqref{boundary} and since $v^3$ is independent of $u^s$, we get
\begin{align*}
\langle \partial_s L,\varphi\rangle=\int_0^T\Big\{-\int_{e_1}\partial_s v^1\cdot\varphi(\partial_x\lambda^1+1)m^1\,dx
-\int_{e_2}\partial_s v^2\cdot\varphi(\partial_x\lambda^2+1)m^2\,dx\\
 +\lambda^1(V_0,t)\partial_s v^1(V_0,t)\cdot \varphi\,m^1(V_0,t)+\lambda^2(V_0,t)\partial_s v^2(V_0,t)\cdot\varphi\, m^2(V_0,t)\Big\}\,dt,
\end{align*}
where
$$\partial_s v^1(x,t)\cdot\varphi=-H(x,V_0)\nabla_s u^s(t)\cdot \varphi\,,\qquad \partial_s v^2(x,t)\cdot\varphi=H(x,V_0)\nabla_s u^s(t)\cdot \varphi$$
and
$$
\nabla_s u^s(t)\cdot \varphi=\sum_{i=1}^S(-1)^{u_{i-1}}\delta_{\tau_i}(t)\varphi_i\,.
$$
We conclude
\begin{align*}
\langle \partial_s L,\varphi\rangle=\sum_{i=1}^S(-1)^{u_{i-1}}\Big\{\int_{e_1}H(x,V_0)(\partial_x\lambda^1(x,\tau_i)+1)m^1(x,\tau_i)\,dx
-\lambda^1(V_0,\tau_i)H(V_0,V_0)m^1(V_0,\tau_i)\\-\int_{e_2}H(x,V_0)(\partial_x\lambda^2(x,\tau_i)+1)m^2(x,\tau_i)\,dx
 +\lambda^2(V_0,\tau_i)H(V_0,V_0) m^2(V_0,\tau_i)\Big\}\,\varphi_i\,.
\end{align*}
Summarizing, the dual problem for \eqref{system_ex}-\eqref{boundary}-\eqref{transition} is
$$
\left\{
\begin{array}{ll}
 -\partial_t \lambda^j-v^j\partial_x\lambda^j=v^j & \mbox{in }e_j\times(0,T),\,j=1,2,3,\\
 \lambda^j(\cdot,T)=0 & \mbox{in } e_j,\\
 \end{array}
\right.
$$
with the boundary condition
\[\lambda^3(V_3,t)=0,\quad \mbox{in }[0,T],\]
and the transmission condition
\[ \lambda^j(V_0,t)v^j(V_0,t)=\lambda^3(V_0,t)v^3(V_0,t) \quad\mbox{if }t\in\{v^j\neq 0\},\,j=1,2\,.\]
Finally, if we impose box constraints $T^G<s_i<T^R$ for $i=1,...,S$, the optimal solution $(m,u^s,\lambda)$ should satisfy,
for all $\bar s\in\R^S$ such that $T^G<\bar s_i<T^R$, the variational inequality
\begin{equation}\label{var_in}
\langle \partial_s L(m,u^s,\lambda),\bar s-s\rangle\ge 0.
\end{equation}\medskip
\begin{remark}
If the velocity field contains the drivers interaction term, then the dual problem for \eqref{system_ex}-\eqref{boundary}-\eqref{transition} is
given by
$$
\left\{
\begin{array}{ll}
 -\partial_t \lambda^j-v^j\partial_x\lambda^j- \nu*(m\partial_x\lambda) =v^j+\nu*m& \mbox{in }e_j\times(0,T),\,j=1,2,3\\
 \lambda^j(\cdot,T)=0 & \mbox{in } e_j\\
 \end{array}
\right.
$$
with the same boundary and transition conditions, where $(\nu*\phi)(x)=\int_\Gamma K(y,x)\phi(y)dy$. The additional terms in the equation
represent a time-backward counterpart of the nonlocal term in the forward equation. Indeed, note that the kernel $K$ is not symmetric by definition and the integration
is here performed with respect to the first variable, looking at $y\to x$ and not $x\to y$ as in \eqref{kernel} .
\end{remark}

\subsection{Discretization}
The above optimality system can be discretized using, for instance, finite difference schemes and solved by some root-finding algorithm.
Here we do not solve the whole discrete system at once, we instead obtain an approximate solution splitting the problem in three simple steps.
With a fixed control, we first solve the forward equation in $m$, then we solve the backward equation in $\lambda$, and finally update the control using
the expression we obtained for the gradient $\partial_s L$, iterating up to convergence. The resulting procedure is a gradient descent method, summarized in
the following algorithm.\\

\noindent{\bf Algorithm }[Forward-Backward system with Gradient Descent]
\begin{enumerate}
 \item[Step 0.] Choose $\varepsilon>0$, $\beta>0$ and set $J^{(0)}=0$;
 \item[Step 1.] Fix an initial guess for $s^{(0)}\in\R^S$, $u_0\in\{0,1\}$  and set $k=0$;
 \item[Step 2.] Use $s^{(k)}$ to build the control $u^{(k)}$;
 \item[Step 3.] Solve the forward problem for $m^{(k)}$ with control $u^{(k)}$;
 \item[Step 4.] Solve the backward problem for  $\lambda^{(k)}$ with control $u^{(k)}$;
 \item[Step 5.] Compute $J^{(k+1)}=J(m^{(k)},s^{(k)})$.\\ If $|J^{(k+1)}-J^{(k)}|<\varepsilon$ go to Step 8, otherwise update $J^{(k)}\leftarrow J^{(k+1)}$ and continue;
 \item[Step 6.] Compute $\partial_s L$ at $(m^{(k)},u^{(k)},\lambda^{(k)})$;
 \item[Step 7.] Update $s^{(k)}\leftarrow \Pi_{\{T^G,T^R\}}\left(s^{(k)}-\beta \partial_s L(m^{(k)},u^{(k)},\lambda^{(k)})\right)$, $k\leftarrow  k+1$ and go to Step 2 \\
 ($\Pi_{\{T^G,T^R\}}$ denotes the component-wise projection on the interval $[T^G,T^R]$);
 \item[Step 8.] Accept $(m^{(k)},u^{(k)},\lambda^{(k)})$ as an approximate solution of the optimal control problem for \eqref{mean_vel}.
\end{enumerate}

In the actual implementation of the algorithm, we employ a standard scheme for conservation laws with a superbee flux limiter, to solve the forward equation in $m$.
On the other hand, the adjoint advection equation in $\lambda$ is solved by means of a standard time-backward upwind scheme. We choose the numerical grid in space and
time subject to a sharp CFL condition, in order to mitigate the numerical diffusion and better observe the nonlocal interactions.
Moreover, we compute all the integrals appearing in the functional $J$, in the nonlocal terms and in the expression of the gradient $\partial_s L$,
by means of a rectangular quadrature rule. We also employ a simple inexact line search technique to compute a suitable step $\beta$  for the gradient update in Step 7.
Finally, the application of control constraints is easily obtained by projection. More precisely, given compatible durations
$0<T^G<T^R$ and the updated $s^{(k)}$ in Step 7, we set $s^{(k)}_i\leftarrow \max\{T^G,\min\{s^{(k)}_i,T^R\}\}$ for $i=1,...,S$.

\subsection{Numerical experiments}
As a preliminary test we compare the local and the nonlocal case. We consider only the evolution of the density $m$ along the edge $e_1$ and we set the control $u(t)\equiv 1$
to keep the traffic light at the end of the road activated (red) during the whole simulation. We choose the length $\ell(e_1)=1$ and $R_1=\frac18$ for the visibility radius of the traffic light.
On the other hand, we choose the nonlocal interaction kernel \eqref{kernel} with $k(r)=\frac{25}{1+r}$ and  visibility radius $R=15dx$, where $dx$ is the step size of the space grid.
Finally, we set the free flow speed $v_f^1\equiv 1$ and the initial distribution $m_0(x)=\chi_{[0.1,0.15]}(x)$.
Figure \ref{fig:locvsnloc} shows the evolution of $m$ and $v$ at different times. Top panels refer to the local case, bottom panels to the nonlocal one.
We represent the density $m$ in black and the velocity $v$ in red, decreasing from $v_f^1$ to zero with a linear ramp while approaching the traffic light, according to the definition
\eqref{ker_light} for $H$.
\begin{figure}[!h]
\centering
\includegraphics[scale=1]{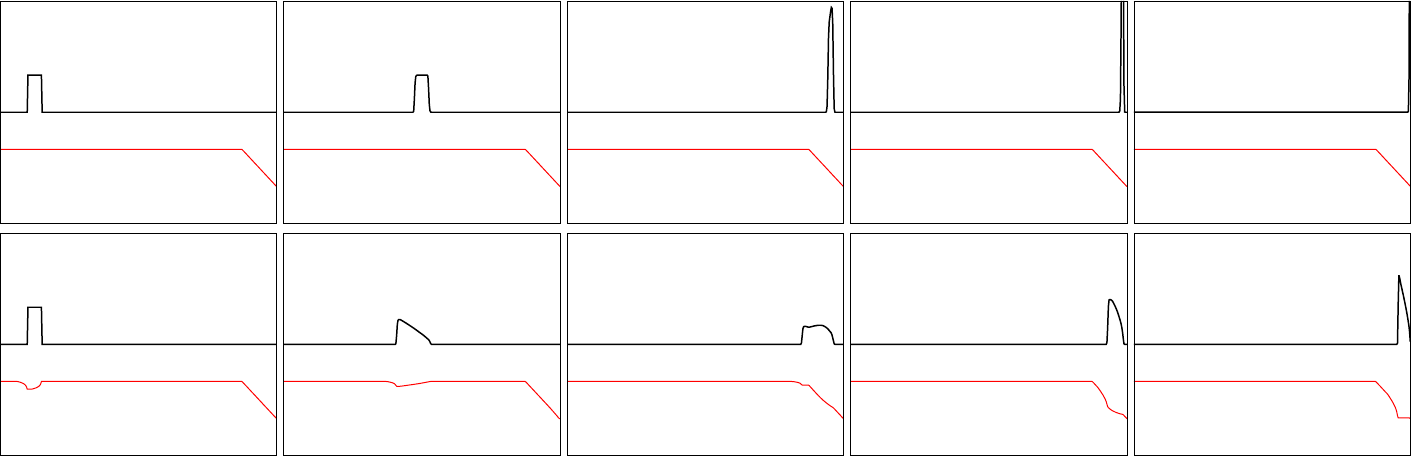}
\caption{Red traffic light: local case (top panels) vs nonlocal case (bottom panels)}\label{fig:locvsnloc}
\end{figure}\\
In the local case $v$ does not depend on time, since $u$ is constant. The density $m$ proceeds without changing profile (except some numerical diffusion at the boundary of its support),
then starts concentrating close to the traffic light. At the final time, all the mass is concentrated at the point closest to the traffic light.\\
In the nonlocal case, drivers interactions are clearly visible both in $m$ and $v$. The initial density
readily activates the nonlocal term in $v$, and $m$ starts assuming the well known triangle-shaped profile.
Close to the traffic light we observe a slowing-down, that propagates backward up to the beginning of the queue, preventing mass concentration.
At final time the profile becomes stationary, we observe that $v$ is zero in the whole support of $m$. \\

We proceed with a test for validating the proposed numerical method. We consider the case of a single switching time $\tau\in[0,T]$, namely we choose
$s=(s_1,s_2)=(\tau,T-\tau)$ without constraints and $u_0=1$, so that the corresponding control is just $u^s(t)=\chi_{[0,\tau]}(t)$ (red light on $e_1$ for $t\le \tau$). This reduces the optimization problem to a minimization in dimension one,
that can be analyzed by an exaustive search in $\tau$ and then compared with our adjoint-based algorithm. We set all the parameters as in the previous test,
in particular we choose constant free flow speeds $v_1^f=v_2^f=v_3^f\equiv 1$ and set $T=1.25$.
We also assume that, apart from $m_0$, no additional mass enters or leaves the network for all $t\in[0,T]$.

We start with $m_0=(m_0^1,m_0^2,m_0^3)=(\chi_{[0.1,0.15]}(x),\chi_{[0.6,0.65]}(x),0)$, i.e. two distributions of equal mass on $e_1$ and $e_2$ that arrive at the traffic
light at different times ($m_2$ first and then $m_1$). In Figure \ref{fig:vmeannormalized}(a) we plot the corresponding (normalized) mean velocity $\bar v(\tau)=-J(m,u^s)/M$ as a function of
$\tau$, where $M=\int_0^T\int_\Gamma m(x,t)dx\,dt$.
\begin{figure}[!h]
\centering
\begin{tabular}{cc}
\includegraphics[scale=.3]{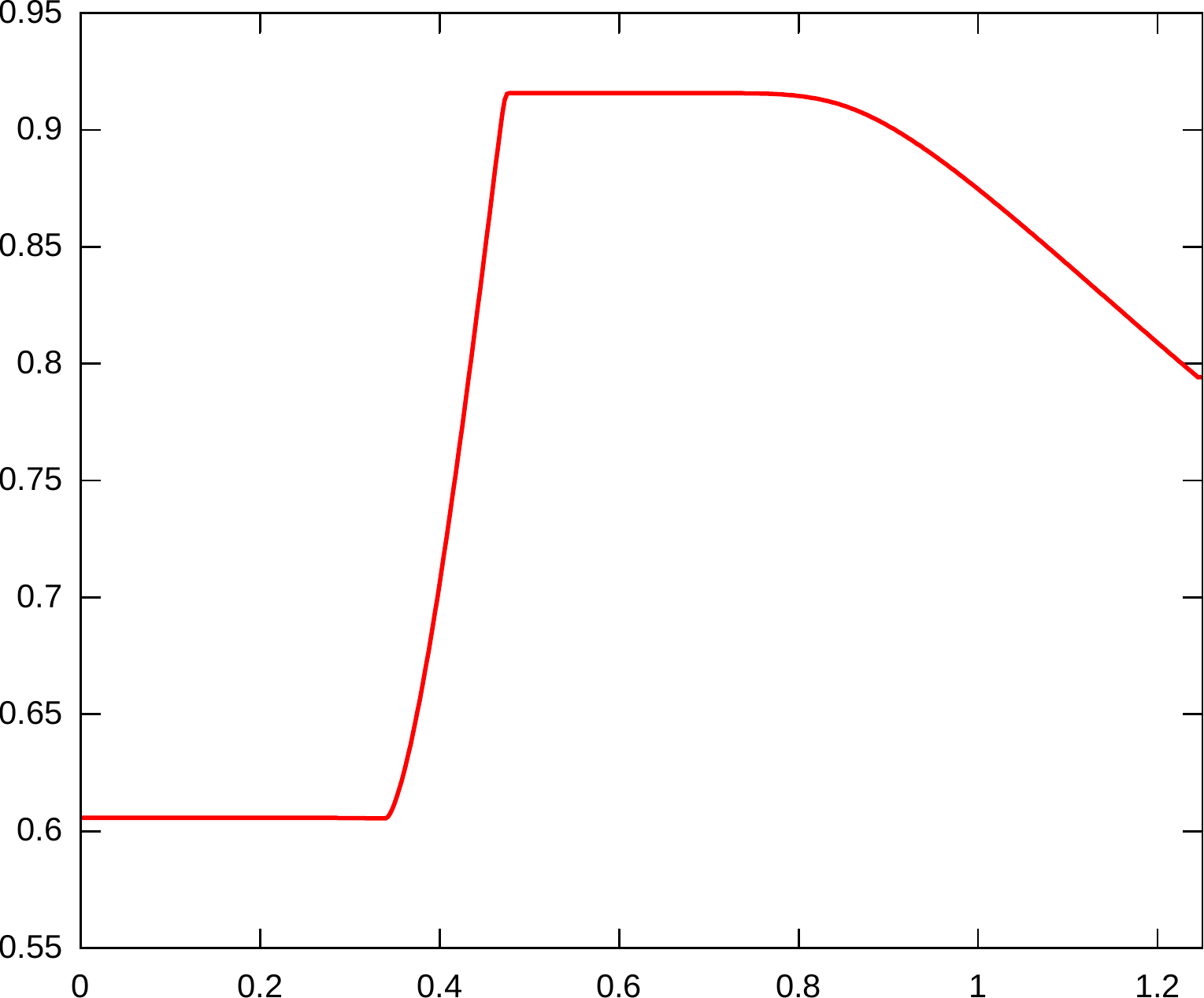}&
\includegraphics[scale=.3]{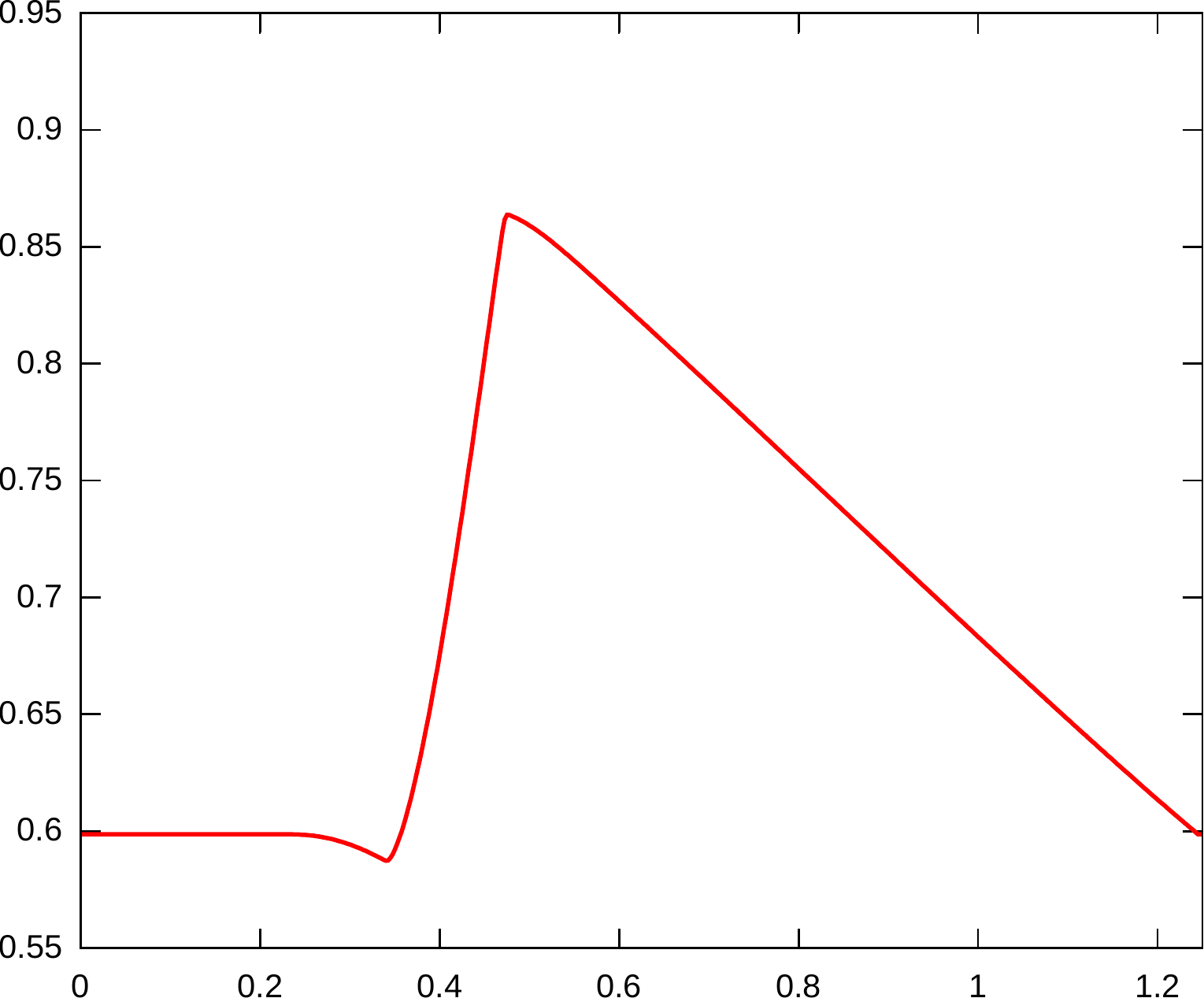}\\
(a)&(b)
\end{tabular}
\caption{Mean velocity for a single switch of the traffic light: well separated $m_1$, $m_2$ (a),  overlapping $m_1$, $m_2$ (b)}\label{fig:vmeannormalized}
\end{figure}
The scenario is pretty clear. If the switch occurs before $m_2$ reaches the traffic light, then only $m_1$ will move from $e_1$ to $e_3$ and the mean velocity cannot improve.
For larger values of $\tau$, also $m_2$ will gradually move to $e_3$, and $\bar v(\tau)$ increases. If now the switch is placed just after $m_2$ leaves $e_2$ and before
$m_1$ approaches the traffic light, we get the best performance, both distributions move as they are on a free road.
Note that, due to the nonlocal interactions, the maximum of $\bar v$ is less than the free flow speed.
Finally, as $\tau$ keeps increasing up to $T$, $m_1$ starts
getting stuck at the traffic light, and $\bar v(\tau)$ decreases.

Now let us repeat the exaustive computation of the mean velocity $\bar v(\tau)$ with $m_0=(m_0^1,m_0^2,m_0^3)=(\chi_{[0.6,0.65]}(x),\chi_{[0.6,0.65]}(x),0)$,
two distributions of equal mass on $e_1$ and $e_2$, starting at the same distance from the traffic light.
Figure \ref{fig:vmeannormalized}(b) shows the shape of the corresponding
$\bar v$. We observe that the maximum of $\bar v$ is lower than in the previous test, and it is achieved at a single point instead of an interval.
This clearly depends on the fact that the two densities are not well separated as before and it is not possible to place a switch without penalizing the overall traffic flow.
Moreover, note that an absolute minimum appears just after the initial plateau. Interestingly, this means that if the switch occurs too early both densities slowdown,
whereas the optimal choice corresponds to switch just after $m_2$ leaves $e_2$ (see Figure \ref{fig:test1-3d} below).

These two simple examples show that, in general, the numerical optimization of the traffic light is a very challenging problem,
since there is a wide number of local extrema where the gradient descent algorithm can stop.
To overcome this issue, we perform several runs with random initial guesses for the controls, and we select the solution obtaining the best result.
\begin{figure}[!h]
\centering
\begin{tabular}{cc}
\includegraphics[scale=1]{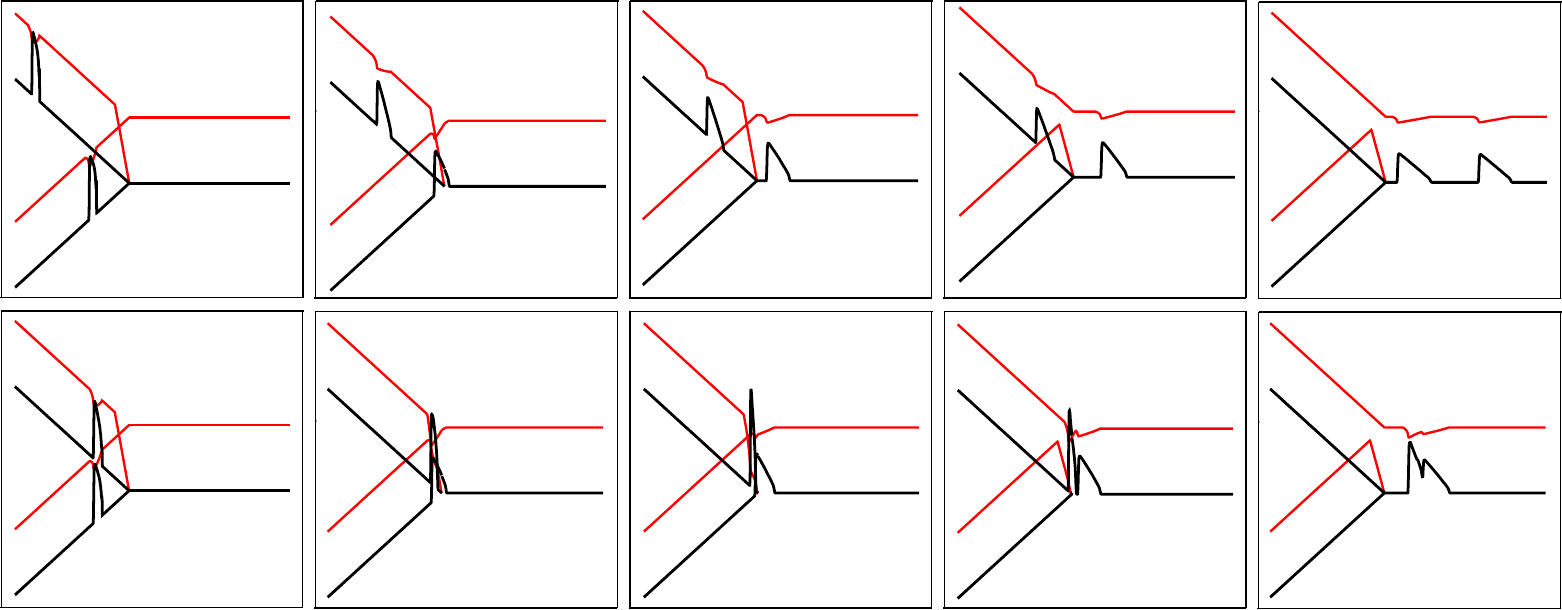}
\end{tabular}
\caption{Optimal solutions for well separated densities (top panels) and overlapping densities (bottom panels)}\label{fig:test1-3d}
\end{figure}
Figure \ref{fig:test1-3d} shows two optimal solutions at different times computed by the gradient descent method,
both achieving the absolute maximum of the corresponding mean velocity. Top panels refer to the case of well separated
densities, bottom panels to the case of overlapping densities. As before, black and red lines represent  $m$ and $v$ respectively.
The fourth frame in each sequence shows the precise
moment of the switch for the traffic light. In the second case we clearly observe that on $e_1$ the traffic is stopped until $m_2$ leaves $e_2$.

We conclude with a more complete example, also including control constraints. All the parameters are the same of the previous tests,
but we fix to $S=5$ the number of switching durations (corresponding to $4$ switching times) and we start with $u_0=0$, i.e. green light on $e_1$.
Moreover, we set the constraints $T^G=0.15$, $T^R=0.3$,
and $m_0$ is given edge-wise by
$$m^1_0(x)=\chi_{[0.1,0.15]}(x)+\chi_{[0.4,0.45]}(x)\,,\quad m^2_0(x)=\chi_{[0.1,0.15]}(x)+\chi_{[0.6,0.65]}(x)\,,\quad m^3_0(x)=0\,.$$
Note that, with this choice, we are mixing together the two cases analyzed before. Indeed, the initial density consists of four blocks which are, respectively,
pairwise overlapped and well separated.
The optimal solution produced by the gradient descent algorithm is $s^*=(0.227,0.251,0.259,0.3,0.21)$. Figure \ref{fig:test2-3d} shows the corresponding evolution at
different times.
\begin{figure}[!h]
\centering
\begin{tabular}{cc}
\includegraphics[scale=1]{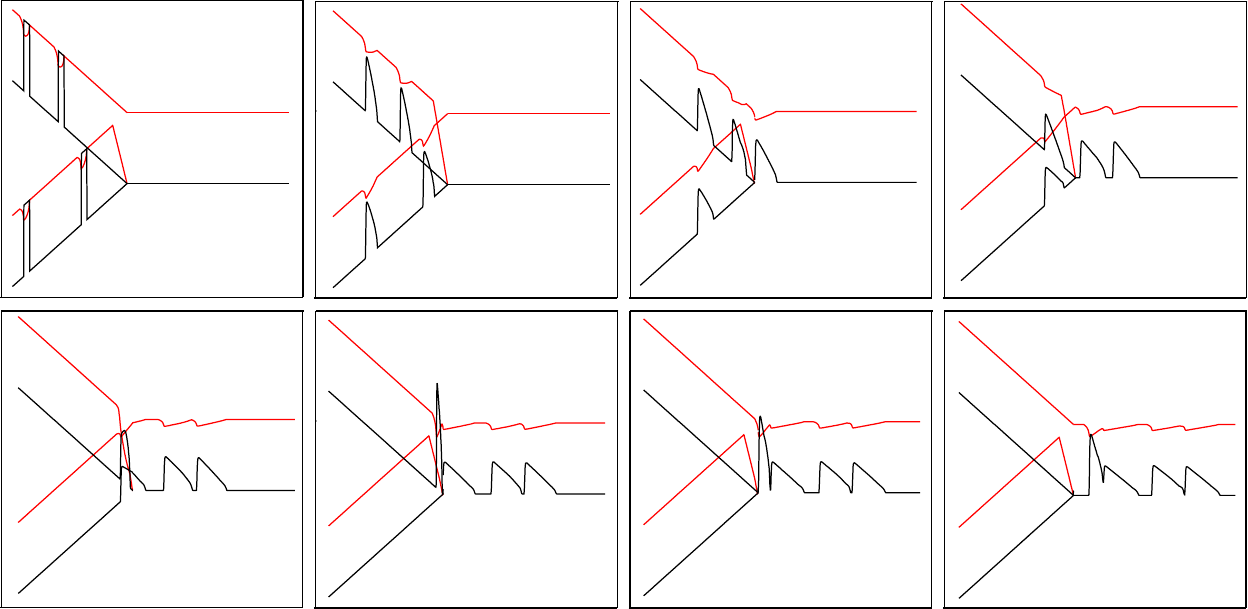}
\end{tabular}
\caption{Optimal solution for a traffic light with $4$ switches}\label{fig:test2-3d}
\end{figure}
We observe that the first switch occurs before $m_2$ approaches the traffic light. This allows the first block of $m_2$ to proceed without slowdowns from $e_2$ to $e_3$.
The second switch occurs immediately after this block leaves $e_2$, so that also the first block of $m_1$ can leave $e_1$ almost undisturbed before
the traffic light switches
again. Now, the remaining densities on $e_1$ and $e_2$ are in overlapping configuration, $m_2$ goes first, while $m_1$ stops. Finally, the last switch occurs just after $m_2$ leaves $e_2$, so that also $m_1$ can move to $e_3$ for the remaining time.


\appendix
\section{Some complementary results for the variational problems}
\begin{proof}[Proof of Lemma \ref{comp1}]
Assume without loss of generality that $M=1$. It is well known that for $\mu \in \cM^+_M(\G_T), |\mu|_{TV} = \mu(\G_T) \leq 1.$
\\By Banach-Alaoglu Theorem it follows the compactness with respect to the weak*-convergence, which implies the same property with respect to the $\|\cdot\|_{BL}^*$ convergence.
\end{proof}

\begin{proof}[Lemma \ref{comp2}]
Since \eqref{hyp_light2} is just a condition which defines the dependence among the components of $u \in \cU$, we prove the compactness of
$$\cU = \{ u \in BV([0,T], \{0,1\}) \,\text{and}\,u\, \text{satisfies \eqref{hyp_light1},}\}.$$
Let $(u_n)_{n \in \N} \subset \cU$.
Denote by $\tau^n_i$ the switching times of $u_n$. By \eqref{hyp_light1},  for every two consecutive switching times $\tau^n_k, \tau^n_{k+1} \in [0,T]$, if $u^n(\tau^n_k) = 1$, then
$$|\tau^n_k - \tau^n_{k+1}| < T^R,$$
otherwise,
$$|\tau^n_k - \tau^n_{k+1}|> T^G.$$
Since $u_{n}(t) \in \{0,1\}$, we can assume that there exists a subsequence, still denoted by $u_n$, such that either $u_n(0)=1$  or $u_n(0)=0$  for every $n \in \N$. Assume now that, w.l.o.g., $u_n(0) = 1$ for every $n \in \N$ and  denote by $I_{n}$ the set of switching times of $u_n$.
It follows that
$$\frac{T}{T^R} \leq \#(I_n)\leq \frac{T}{T^G}.$$
As before, we can assume, w.l.o.g., that that there exists  $N\in\N$ such that $\#(I_n) = N$ for all $n\in\N$.
Since $I_{n} \subset [0,T]$, applying the Cantor diagonal procedure, it follows that there exists a subsequence $(I_{n_k})_{k \in \N}$ such that $\tau_{i}^{n_k} \to \tau_i$ for $i=1, \hdots, N$. In this way, we define a  candidate $u$ as limit for the subsequence $u_{n_k}$ from the switching times set $\{\tau_1, \hdots, \tau_N\}$ and $u(0) = 1$.
To conclude, we only need to show that $u_{n_k} \to u$ in $L^1$. By construction,
$$\|u_{n_k} - u\|_{L^1} = \sum_{i=1}^{N} |\tau_{i}^{n_k} - \tau_{i}| \leq N \sup_{i=1, \hdots, N} |\tau_{i}^{n_k} - \tau_{i}| \to_{k \to \infty} 0$$
\end{proof}
\begin{proof}[Proof of Lemma \ref{closed} (traffic lights)]
In this case, the distirbution $\mu$ has no role since it depends exclusively on $u$. Hence, we reduce on $\cX = \cM^+(\G_T)\times \cU,$ where $\cU$ defined by \eqref{control_light}.\\
 Let $(m_n, u_n)_{n \in \N} \subset A$ such that $(m_n, u_n) \to (m, u)$  with respect the norm $\|\cdot\|_{BL}^* + \|\cdot\|_{L^1}.$\\
The closure on the first component derives from the proof of Lemma 4.1 in \cite{Bon_Butt} and the results in \cite{CDTnonloc}.
\\
Instead, the closure on the second component derives from the compactness of $\cU$.
Indeed, there exists a subsequence $(u_{n_k})_{k \in \N}$ which converges to $\tilde u \in \cU$, but it also converges to $u$ by assumption.
Then, it follows that $u = \tilde u \in \cU.$\\

\end{proof}
\begin{proof}[Proof of Lemma \ref{closed_2} (autonomous cars)]
It follows adopting the argument in the previous proof, for $\cX = \cM^+_M(\G_T) \times \cM^+_M(\G_T) \times \cU$ endowed with 
 the norm $\|\cdot\|_{BL}^* + \|\cdot \|_{BL}^* + \|\cdot\|_{\infty}.$\\

\end{proof}

\bibliographystyle{amsplain}

\end{document}